\theoremstyle{plain}
\newtheorem{theorem}{Theorem}
\newtheorem{lemma}{Lemma}
\newtheorem{corollary}{Corollary}
\theoremstyle{remark}
\newcommand{\bbH}{\mathbb{H}}
\newcommand{\bbB}{\mathbb{B}}
\newcommand{\bbU}{\mathbb{U}}
\newcommand{\gpsv}{\textit{GPvs}(a_1, a_2, \pi_B)}
\newcommand{\gplp}{\textit{GPlp}(a_1, a_2, \pi_B, \pi_Q)}
\newcommand{\gpsvstar}{\textit{GPvs}^*(a_1, a_2, \pi_B)}
\newcommand{\gplpstar}{\textit{GPlp}^*(a_1, a_2, \pi_B, \pi_Q)}
\DeclareMathOperator*{\plim}{plim} 
\newcommand{\rline}{\mathbb{R}}
\newcommand{\expct}{\mathbb{E}}
\newcommand{\scO}{\mathcal{O}}
\newcommand{\scB}{\mathcal{B}}
\newcommand{\scQ}{\mathcal{Q}}
\newcommand{\beq}{\begin{equation}}
\newcommand{\eeq}{\end{equation}}
\newcommand{\diag}{\mathrm{diag}}
\newcommand{\gam}{\textit{Ga}}
\newcommand{\nm}{\textit{N}}
\newcommand{\bern}{\textit{Ber}}
\newcommand{\td}{{\tilde d}}
\newcommand{\tq}{{\tilde q}}
\title{Dimension adaptability of Gaussian process models with variable selection and projection}
\author{Surya T Tokdar\\ {\it \small Duke University}}
\date{}
\begin{document}
\maketitle

\begin{abstract}
It is now known that an extended Gaussian process model equipped with rescaling can adapt to different smoothness levels of a function valued parameter in many nonparametric Bayesian analyses, offering a posterior convergence rate that is optimal (up to logarithmic factors) for the smoothness class the true function belongs to. This optimal rate also depends on the dimension of the function's domain and one could potentially obtain a faster rate of convergence by casting the analysis in a lower dimensional subspace that does not amount to any loss of information about the true function. In general such a subspace is not known a priori but can be explored by equipping the model with variable selection or linear projection. We demonstrate that for nonparametric regression, classification, density estimation and density regression, a rescaled Gaussian process model equipped with variable selection or linear projection offers a posterior convergence rate that is optimal (up to logarithmic factors) for the lowest dimension in which the analysis could be cast without any loss of information about the true function. Theoretical exploration of such dimension reduction features appears novel for Bayesian nonparametric models with or without Gaussian processes.\\

{\noindent \it Keywords.} Bayesian nonparametric models, Posterior convergence rates, Gaussian processes, Dimension reduction, Nonparametric regression and classification, Density estimation and regression. 
\end{abstract}

\section{Introduction}
\label{intro}

Gaussian processes are widely used in Bayesian analyses for specifying prior distributions over function valued parameters. Examples include
 {\it spatio-temporal modeling} \citep{handcock.stein.93, kim.etal.05, banerjee&etal08}, {\it computer emulation} \citep{sacks.etal89, kennedy&ohagan01, oakley&ohagan02, gramacy.lee08}, {\it nonparametric regression and classification} \citep{neal98, csato.etal.00, rasmussen&williams06, short.etal.07}, {\it density estimation} \citep{lenk88, tokdar07}, {\it density and quantile regression} \citep{tokdar&etal10, tokdar&kadane11}, {\it functional data analysis} \citep{shi&wang08, petrone&etal09} and {\it image analysis} \citep{sudderth&jordan09}. \citet{rasmussen&williams06} give a thorough overview of likelihood based exploration of Gaussian process models, including Bayesian treatments.

Theoretical properties of many Bayesian Gaussian process models have been well researched \citep[see][and the references therein]{tokdar&ghosh07, choi&schervish07, ghosal&roy06, vandervaart&vanzanten08, vandervaart&vanzanten09, dejonge.zanten10, castillo.11}. In particular, \cite{vandervaart&vanzanten09} present a remarkable adaptation property of  such models for nonparametric regression, classification and density estimation. They show a common Gaussian process (GP) prior specification equipped with a suitable rescaling parameter offers posterior convergence at near optimal minimax asymptotic rates across many classes of finitely and infinitely differentiable {\it true} functions. The rescaling parameter is a stochastic counterpart of a global bandwidth parameter commonly seen in smoothing-based non-Bayesian methodology. However, a single prior distribution on the rescaling parameter is enough to ensure near optimal convergence across all these classes of functions.

In this article we explore additional adaptation properties of GP models that are also equipped with variable selection or linear projection. To appreciate the practical utility of this exercise, consider a nonparametric (mean) regression model $Y_i = f(X_i) + \xi_i$, $i \ge 1$, where $X_i$'s are $d$-dimensional and $\xi_i$'s are independent draws from a zero mean normal distribution. When $f$ is assigned a suitable GP prior distribution equipped with a rescaling parameter and the true conditional mean function $f_0$ is H\"older $\alpha$ smooth (Section \ref{main set up}), the posterior distribution on $f$ converges to $f_0$ at a rate $n^{-\alpha/(2\alpha + d)} (\log n)^k$. This rate, without the $\log n$ term is optimal for such an $f_0$ in a minimax asymptotic sense \citep{stone82}. Now suppose $f_0(X_i)$ depends on $X_i$ only through its first two coordinates $Z_i$. If this information was known, we could cast the model as $Y_i = g(Z_i) + \xi_i$ and assign $g$ with a GP prior distribution with rescaling to obtain a faster convergence rate of $n^{-\alpha / (2\alpha + 2)}(\log n)^{k_1}$. If in addition we knew that $f_0(X_i)$ depends only on the difference $U_i$ of the first two coordinates of $X_i$, then we would instead cast the model as $Y_i = h(U_i) + \xi_i$ and with a rescaled GP prior on $h$ obtain an even faster convergence rate of $n^{-\alpha / (2\alpha + 1)}(\log n)^{k_2}$.

In practice, we do not know what sort of lower dimensional projections of $X_i$ perfectly explain the dependence of $f_0(X_i)$ on $X_i$. But this could be explored by extending the GP model to include selection of variables \citep{linkletter06} or linear projection onto lower dimensional subspaces \citep{tokdar&etal10}. The questions we seek to answer are as follows. Do GP models equipped with rescaling and variable selection offer a posterior convergence rate of $n^{-\alpha/(2\alpha + d_1)}(\log n)^{k_3}$ when the true $f$ is a H\"older $\alpha$-smooth function $f_0$ that depends only on $d_1 \le d$ many coordinates of its argument? More generally, do GP models equipped with rescaling and linear projection offer a posterior convergence rate of $n^{-\alpha / (2\alpha + d_0)}(\log n)^{k_4}$ when the true $f$ is a H\"older $\alpha$-smooth function such that $f_0(X_i)$ depends on a rank-$d_0$ linear projection of $X_i$? We demonstrate the answer to either question to be affirmative for extensions of the so called square exponential GP models in nonparametric mean regression, classification, density estimation and density regression.

Although projection or selection based dimension reduction is routinely employed in a variety of Bayesian nonparametric models with or without the use of Gaussian processes \citep[see for example,][]{rodriguez.dunson11}, their theoretical implications have not been fully explored. Best results so far demonstrate posterior consistency \citep{tokdar&etal10, pati.etal.11}, which already holds without these advanced features. Our results indicate that there is indeed an added advantage in terms of possibly faster posterior convergence rates. These results, with necessary details are presented in Section \ref{main}, which, we hope, can be appreciated by all readers interested in Bayesian nonparametric models with or without technical knowledge about Gaussian processes. Section \ref{basic} presents a set of deeper and more fundamental results, with non-trivial extensions of results presented in \cite{vandervaart&vanzanten09}. However, we have tried our best to make our calculations easily accessible to other researchers interested in studying extensions of GP models with additional adaptation features. We conclude in Section \ref{disc} with remarks on density regression versus density estimation and on a recent, unpublished work on a similar topic by \citet{anirban11}.

\section{Main results}
\label{main}
\subsection{Extending a rescaled GP with variable selection or projection}
\label{main set up}
We will restrict ourselves to nonparametric models where a function valued parameter $f$, to be modeled by a GP or its extensions, is defined over a compact subset of $\rline^d$ fom some $d$. Without loss of generality we can assume this set to be equal to $\bbU_d$, the unit disc $\{x \in \rline^d: \|x\| \le 1\}$ centered at the origin. If the actual domain of $f$ is not elliptic, such as a rectangle giving bounds on each coordinate of the argument $x$, we will simply shift and scale it to fit inside $\bbU_d$. Working on the larger domain $\bbU_d$ poses no technical difficulties.

Let $W = (W(t): t \in \rline^d)$ be a separable, zero mean Gaussian process with an isotropic, square exponential covariance function $\expct \{W(t)W(s)\} = \exp(-\|t - s\|^2)$. For any $a > 0$, $b \in \{0, 1\}^d$ and $q \in \scO_d$, define $W^{a,b,q} = (W^{a,b,q}(x): x \in \bbU_d)$ by 
\beq
W^{a,b,q}(x) = W(\diag(ab)\cdot qx),
\eeq 
where for any vector $v$, $\diag(v)$ denotes the diagonal matrix with the elements of $v$ on its diagonal. Note that $W^{a,b,q}(x) = W^{a,b,q}(z)$ if and only if $Rx = Rz$ where $R$ is the orthogonal projection matrix $q'\diag(b)q$. Therefore the law of $W^{a,b,q}$ defines a probability measure on functions $f: \bbU_d \to \rline$ such that $f(x)$ depends on $x$ only through the projection $R$. Also note that with $q = I_d$, the $d$-dimensional identity matrix, $R$ simply projects along the axes selected by $b$.

Let $|b|$ denote the number of ones in a $b \in \{0,1\}^d$. Suppose $(A,B,Q)$ are distributed as
\beq
(B, Q ) \sim \pi_B \times \pi_Q, \;\; A^{|b|}| (B = b, Q ) \sim \gam(a_1, a_2), 
\label{abq dist}
\eeq
independently of $W$, where $a_1 \ge 1, a_2 > 0$, $\pi_B$ is a strictly positive probability mass function on $\{0,1\}^d$ and $\pi_Q$ is a strictly positive probability density function on $\scO_d$. When $B = 0$, we simply take $A$ to be degenerate at 1. The law of the process $W^{A,B,Q}$, which extends the square exponential GP law by equipping it with rescaling and linear projection, will be denoted $\textit{GPlp}(a_1,a_2,\pi_B, \pi_Q)$. Similarly, the law of the process $W^{A, B, I_d}$, which extends the square exponential GP law by equipping it with rescaling and variable selection, will be denoted $\textit{GPvs}(a_1, a_2, \pi_B)$.

In the sequel, a function $f:U\to \rline$ defined on a compact subset $U$ of $\rline^d$ is called H\"older $\alpha$ smooth for some $\alpha > 0$ if it has bounded continuous derivatives (in the interior of $U$) up to order $\lfloor \alpha \rfloor$, the largest integer smaller than $\alpha$ with all its $\lfloor \alpha \rfloor$-th order partial derivatives being H\"older continuous with exponent no larger than $\alpha - \lfloor \alpha \rfloor$.

\subsection{Mean regression with Gaussian errors}

Nonparametric regression of a response variable on a vector of covariates with Gaussian errors comes in two flavors, depending on how the {\it design points}, i..e, the covariate values are obtained. They could either be fixed in an experimental study or measured as part of an observational study. The notion of posterior convergence differs slightly across the two contexts, a brief overview is given below.

\paragraph {\it Fixed design regression.} Suppose real-valued observations $Y_1, Y_2, \cdots$ are modeled as $Y_i = f(x_i) + \xi_i$ for a given sequence of points $x_1, x_2, \cdots$ from $\bbU_d$, with independent, $\nm(0, \sigma^2)$ errors $\xi_1,\xi_2,\cdots$. Assume $(f, \sigma)$ is assigned a prior distribution $\Pi_{f,\sigma}(df, d\sigma) = \Pi_f(df) \times \pi_{\sigma}(d\sigma)$ where $\Pi_f$ is either $\gpsv$ or $\gplp$ and $\pi_{\sigma}$ is a probability measure with a compact support inside $(0, \infty)$ and has a Lebesgue density that is strictly positive on this support.  

Let $\Pi^{n}_{f,\sigma}$ denote the posterior distribution of $(f, \sigma)$ given only the first $n$ observations $Y_1, \cdots, Y_n$, i.e.,
$$\Pi^n_{f,\sigma}(df, d\sigma) = \frac{\;\;\sigma^{-n} \exp\{-\frac1{2\sigma^2}\sum_{i = 1}^n (Y_i - f(x_i))^2\}\Pi_{f,\sigma}(df, d\sigma)}{\int \sigma^{-n} \exp\{-\frac1{2\sigma^2} \sum_{i = 1}^n (Y_i - f(x_i))^2\}\Pi_{f,\sigma}(df, d\sigma)}.$$
For every $n \ge 1$, define a design-dependent metric $\|\cdot\|_n$ on $\rline^{\bbU_d}$ as $\|f - g\|^2_n = \frac1n \sum_{i = 1}^n (f(x_i) - g(x_i))^2$. Let $(\epsilon_n: n \ge 1)$ be a sequence of positive numbers with $\lim_{n \to \infty} \epsilon_n = 0$ and $\lim_{n \to \infty} n\epsilon_n^2 = \infty$. For any fixed $f_0 : \bbU_d \to \rline$ and $\sigma_0 > 0$ we say the posterior converges at $(f_0, \sigma_0)$ at a rate $\epsilon_n$ (or faster) if for some $M > 0$,
$$ \plim_{n \to \infty} \Pi^n_{f,\sigma}(\{(f,\sigma): \|f - f_0\|_n + |\sigma - \sigma| \ge M \epsilon_n\}) = 0$$
whenever $Y_i = f_0(x_i) + \xi_i$ with independent $\xi_i \sim \nm(0, \sigma_0^2)$. Here ``$\plim$'' indicates {\it convergence in probability}.

\paragraph{\it Random design regression.} In the random design setting we have observations $(X_i, Y_i) \in U_d \times \rline$, $i = 1, 2, \cdots$ which are partially modeled as $Y_i = f(X_i) + \xi_i$ with $\nm(0, \sigma^2)$ errors $\xi_1, \xi_2, \cdots$. The design points $X_1, X_2, \cdots$ are assumed to be independent observations from an unknown probability distribution $G_x$ on $U_d$ and $X_i$'s and $\xi_i$'s are assumed independent. However inference on $G_x$ is not deemed important. Assume $(f, \sigma)$ is assigned a prior distribution $\Pi_{f,\sigma}$ as in the previous subsection and the corresponding posterior distribution based on the first $n$ observations $(X_1, Y_1), \cdots, (X_n, Y_n)$ is denoted $\Pi^n_{f,\sigma}$. 

Let $\|\cdot\|_{G_x}$ denote the $L_2$-metric with respect to $G_x$, i.e., $\|f - g\|_{G_x}^2 = \int (f(x) - g(x))^2 G_x(dx)$. Consider a sequence $(\epsilon_n > 0: n \ge 1)$ as before. For any fixed $f_0 : \bbU_d \to \rline$ and $\sigma_0 > 0$ we say the posterior converges at $(f_0, \sigma_0)$ at a rate $\epsilon_n$ (or faster) if for some $M > 0$,
$$ \plim_{n \to \infty} \Pi^n_{f,\sigma}(\{(f,\sigma): \|f - f_0\|_{G_x} + |\sigma - \sigma| \ge M \epsilon_n\}) = 0$$
whenever $Y_i = f_0(X_i) + \xi_i$ with $(\xi_i, X_i) \sim \nm(0, \sigma_0^2) \times G$ independently across $i \ge 1$.

Note that in either setting convergence at $(f_0, \sigma_0)$ at a rate $\epsilon_n$ also implies convergence of the (marginal) posterior distribution on $f$ to $f_0$ at the same rate (or faster). For either setting we can state the following dimension adaptation result.

\begin{theorem}
\label{thm reg}
Assume $f_0:\bbU_d \to \rline$ is a H\"older $\alpha$-smooth function on $U_d$ and $\sigma_0 >0$ is inside the support of $\pi_\sigma$. If $f_0(x)$ depends on $x$ only through $d_1 \le d$ many coordinates of $x$ and $\Pi_f = \gpsv$, the posterior converges at $(f_0, \sigma_0)$ at a rate $\epsilon_n = n^{-\alpha / (2\alpha + d_1)}(\log n)^{k}$ for every $k > d+1$. Furthermore, if $\alpha > 1$, $f_0(x)$ depends on $x$ only through a rank-$d_0$ linear projection $R x$  and $\Pi_f = \gplp$, the posterior converges at $(f_0, \sigma_0)$ at a rate $\epsilon_n = n^{-\alpha / (2\alpha + d_0)}(\log n)^{k}$ for every $k > d+1$.
\end{theorem}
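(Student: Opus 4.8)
The plan is to derive both assertions from the general posterior-contraction machinery, in its fixed-design and random-design Gaussian-regression forms, by verifying the three standard ingredients: a prior-mass (small-ball) lower bound around $(f_0,\sigma_0)$, a metric-entropy bound for a sieve $\mathcal F_n$, and an exponentially small bound on the prior mass of $\mathcal F_n^c$. Since $\|f-g\|_n\le\|f-g\|_\infty$ and $\|f-g\|_{G_x}\le\|f-g\|_\infty$ on $\bbU_d$, it suffices to carry out all three in the sup-norm; sup-norm closeness of $f$ to $f_0$ together with $|\sigma-\sigma_0|$ small controls the relevant Kullback--Leibler neighborhoods in either design. Because $\pi_\sigma$ is compactly supported inside $(0,\infty)$ with positive density at $\sigma_0$, its contribution to each bound is a negligible multiple of $\log(1/\epsilon_n)$, so the whole problem reduces to sup-norm statements about $\Pi_f$, which I would analyze through the concentration function $\phi_{f_0}(\epsilon)=\inf\{\tfrac12\|h\|_{\bbH}^2:\ h\in\bbH,\ \|h-f_0\|_\infty<\epsilon\}-\log\Pi(\|W\|_\infty<\epsilon)$ and the extensions of \cite{vandervaart&vanzanten09} established in Section \ref{basic}.

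For the variable-selection case I would lower bound the prior mass by restricting to the event $B=b^*$, where $b^*$ selects exactly the $d_1$ coordinates on which $f_0$ depends; this costs only the constant factor $\pi_B(b^*)>0$ and reduces the small-ball problem to that of a rescaled square-exponential GP living in dimension $d_1$. The Section \ref{basic} bound then gives $\phi_{f_0}(\epsilon)\lesssim \epsilon^{-d_1/\alpha}(\log(1/\epsilon))^{1+d_1}$, and solving $\phi_{f_0}(\epsilon_n)\asymp n\epsilon_n^2$ yields $\epsilon_n=n^{-\alpha/(2\alpha+d_1)}$ up to a power of $\log n$, which the statement absorbs into $(\log n)^{k}$. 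The sieve would be taken as a finite union, over all $b\in\{0,1\}^d$, of sets $M_n\bbH_1^{a}+\epsilon_n\bbB$, where $\bbH_1^{a}$ is the unit ball of the RKHS of the $a$-rescaled process and $\bbB$ the sup-norm unit ball, with the bandwidth truncated at $a\le\bar a_n$. The key observation is that the prior $A^{|b|}\sim\gam(a_1,a_2)$ penalizes large bandwidths at a rate tied to $|b|$: the remaining mass satisfies $\Pi(A>\bar a_n\mid B=b)\approx e^{-a_2\bar a_n^{|b|}}$, while the sup-norm metric entropy of the truncated ball scales, up to logarithmic factors, like $\bar a_n^{|b|}$. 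Both are thus governed by $\bar a_n^{|b|}$, and choosing $\bar a_n^{|b|}\asymp n\epsilon_n^2$ makes the remaining-mass and entropy conditions hold \emph{simultaneously and uniformly across all subset dimensions}; in particular the higher-dimensional subsets never spoil the rate dictated by $d_1$. The residual logarithmic factors in the entropy, whose exponent can be as large as $1+d$, are exactly what force the requirement $k>d+1$, and the $2^d$ subsets contribute only an additive $\log(2^d)$.

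The projection case is where the real work lies, and I expect it to be the main obstacle. Writing $f_0=g_0\circ R$ with $R=q^{*\prime}\diag(b^*)q^*$ of rank $d_0$, I would again condition on $B=b^*$ but now integrate $Q$ over a shrinking neighborhood $\mathcal N_n=\{q:\|q-q^*\|<\delta_n\}$, since $\pi_Q$ places no atom at $q^*$. For each $q\in\mathcal N_n$ the process sees the misaligned projection $\tilde R_q=q'\diag(b^*)q$, and the difficulty is to approximate $g_0\circ R$ in sup-norm by an RKHS element of the misaligned process while keeping its RKHS norm controlled \emph{uniformly} over $\mathcal N_n$. A first-order Taylor expansion $g_0(Rx)=g_0(\tilde R_q x)+\nabla g_0(\tilde R_q x)\cdot(R-\tilde R_q)x+\cdots$ lets one transfer the $d_0$-dimensional approximant across coordinate systems, but bounding both the sup-norm remainder and the RKHS norm of the correction requires $g_0$ to have a bounded gradient --- this is exactly where the hypothesis $\alpha>1$ enters. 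Taking $\delta_n\asymp\epsilon_n$ keeps the approximation error $O(\delta_n)$ below $\epsilon_n$ while costing only $\log(1/\pi_Q(\mathcal N_n))\asymp\binom{d}{2}\log(1/\epsilon_n)\asymp\log n$ in prior mass, which is negligible against $n\epsilon_n^2$; the $d_0$-dimensional concentration bound then delivers $\epsilon_n=n^{-\alpha/(2\alpha+d_0)}(\log n)^{k}$. For the sieve one simply augments the construction of the previous paragraph with a $\delta$-net over the compact group $\scO_d$, contributing only a polylogarithmic $\binom{d}{2}\log(1/\delta)$ to the entropy, so the remaining-mass and entropy analysis carries over unchanged. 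The hard part throughout is this uniform-in-$q$ control of the misaligned RKHS approximation and its norm, which is the genuinely new estimate beyond \cite{vandervaart&vanzanten09}.
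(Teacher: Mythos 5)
Your overall architecture coincides with the paper's: reduce Theorem \ref{thm reg} to the prior-mass/remaining-mass/entropy conditions (\ref{c1})--(\ref{c3}) on the extended GP, dispose of $\sigma$ via its compactly supported positive density, and, for variable selection, condition on $B=b_0$ and trade off the Gamma tail of $A^{|b|}$ against entropy by setting the bandwidth cutoff so that $r_n^{|b|}\asymp n\tilde\epsilon_n^2$ uniformly in $b$; this is essentially the paper's proof of Theorem \ref{vs}, including the $(\log n)^{1+d}$ accounting that forces $k>d+1$. Your prior-mass step for the projection case is also in substance the paper's (neighborhood $\|q-q_0\|_S\lesssim\tilde\epsilon_n$, Lipschitz bound from $\alpha>1$, $\pi_Q$-mass $\gtrsim\tilde\epsilon_n^{d(d-1)/2}$ costing only $O(\log n)$ against $n\tilde\epsilon_n^2$). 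However, the difficulty you single out there --- uniform-in-$q$ control of the RKHS norm of a Taylor correction --- is not actually needed: since $W$ is rotationally invariant, the small-ball probability of $W^{A,b_0,q}$ around the \emph{rotated} truth $f_q(x)=v_0((qx)_{b_0})$ is the same for every $q$, so one only needs the sup-norm bound $\|f_0-f_q\|_\infty\le D_2\|q-q_0\|_S$ and a triangle inequality; no RKHS element is ever transported across frames in the prior-mass argument.

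The genuine gap is in your sieve/entropy step for the projection case. The sieve must capture $W^{A,b,q}$ with high probability for \emph{every} $q$ in the support of $\pi_Q$, so it must essentially contain the continuum union $\cup_{q\in\scO_d}\scB^{b,q}_n$, and covering $\scO_d$ by a net does not by itself bound the sup-norm entropy of that union: a function built from the RKHS ball $\bbH_1^{a,b,q}$ at an off-net $q$ need not lie near any function built at a net point $\tq$ unless one proves a quantitative comparison of rescaled RKHS balls across nearby rotations. This is exactly the paper's Lemma \ref{q lem}, $\bbH_1^{a,b,q}\subset\bbH_1^{a,b,\tq}+a\sqrt d\,\|q-\tq\|_S\bbB_1$, proved by showing unit-ball elements are $a\sqrt d$-Lipschitz and pulling them back along $q'\tq$; it is the genuinely new estimate beyond \cite{vandervaart&vanzanten09}, and your proposal never states or proves it. Relatedly, because the sieve inflates the RKHS unit ball by the factor $(r_n/\delta_n)^{|b|/2}M_n$, which grows polynomially in $n$, the net must be taken at the much finer resolution $\zeta_n=\epsilon_n/\{M_nr_n\sqrt d\,(r_n/\delta_n)^{|b|/2}\}$ rather than at scale $\epsilon_n$; only after applying Lemma \ref{q lem} does one see that this still contributes just a multiple of $\log n$ to the entropy. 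As written, your claim that with a net over $\scO_d$ the ``remaining-mass and entropy analysis carries over unchanged'' is precisely the step that fails without this lemma, while the step you flagged as the main obstacle (prior mass) is the one that needs no new RKHS machinery at all.
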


\subsection{Classification}
Suppose observations $(X_i, Y_i) \in \bbU_d \times \{0,1\}$, $i = 1, 2, \cdots$ are (partially) modeled as $Y_i \sim \bern(\Phi(f(X_i)))$, independently across $i$, where $\Phi$ is the standard normal or the logistic cumulative distribution function, with $X_i$'s assumed to be independent draws from a probability distribution $G_x$ on $\bbU_d$. Assume $f$ is assigned a prior distribution $\Pi_f$ which is either $\gpsv$ or $\gplp$, and let $\Pi^n_f$ denote the corresponding posterior distribution based on the first $n$ observations $(X_1, Y_1), \cdots, (X_n, Y_n)$, i.e.,
$$\Pi^n_f(df) = \frac{\;\;\left[\prod_{i = 1}^n  \Phi(f(X_i))^{Y_i}\{1 - \Phi(f(X_i))\}^{1 - Y_i}\right]\Pi_f(df)}{\int \left[\prod_{i = 1}^n  \Phi(f(X_i))^{Y_i}\{1 - \Phi(f(X_i))\}^{1 - Y_i}\right]\Pi_f(df)}.$$
Consider a sequence $(\epsilon_n > 0: n \ge 1)$ as before. For any fixed $f_0 : \bbU_d \to \rline$ and $\sigma_0 > 0$ we say the posterior converges at $f_0$ at a rate $\epsilon_n$ (or faster) if for some $M > 0$,
$$ \plim_{n \to \infty} \Pi^n_f(\{f: \|f - f_0\|_{G_x} \ge M \epsilon_n\}) = 0$$
whenever $Y_i | X_i \sim \bern(\Phi(f_0(X_i)))$ and $X_i \sim G$, independently across $i \ge 1$.

\begin{theorem}
\label{thm class}
Let $f_0:\bbU_d \to \rline$ be a H\"older $\alpha$-smooth function on $U_d$. If $f_0(x)$ depends on $x$ only through $d_1 \le d$ many coordinates of $x$ and $\Pi_f = \gpsv$, the posterior converges at $f_0$ at a rate $\epsilon_n = n^{-\alpha / (2\alpha + d_1)}(\log n)^k$ for every $k > d + 1$. Furthermore, if $\alpha > 1$, $f_0(x)$ depends on $x$ only through a rank-$d_0$ linear projection $Rx$  and $\Pi_f = \gplp$, the posterior converges at $f_0$ at a rate $\epsilon_n = n^{-\alpha / (2\alpha + d_1)}(\log n)^k$ for every $k > d + 1$.
\end{theorem}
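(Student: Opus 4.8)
The plan is to treat the classification model as an i.i.d.\ density estimation problem and invoke the general theory of posterior contraction for Gaussian process priors, reducing the whole statement to two ingredients: a prior-mass (Kullback--Leibler) condition and a sieve/entropy condition, both controlled by the concentration function of the prior process. Writing $p_f(x,y) = \Phi(f(x))^y\{1-\Phi(f(x))\}^{1-y} g_x(x)$ for the joint density of $(X,Y)$, with $g_x$ the common and inferentially irrelevant density of $G_x$, the observations are i.i.d.\ from $p_{f_0}$ and the target is contraction of the $f$-marginal in $\|\cdot\|_{G_x}$. The engine is the van der Vaart--van Zanten reduction: if the process $W^{A,B,Q}$ has concentration function $\phi_{f_0}(\epsilon)$ at $f_0$, and if $\epsilon_n$ solves $\phi_{f_0}(\epsilon_n) \le n\epsilon_n^2$ together with a matching sieve of metric entropy $O(n\epsilon_n^2)$ and complementary prior mass $e^{-c n \epsilon_n^2}$, then the posterior contracts at rate $\epsilon_n$ in Hellinger distance on the densities. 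These GP-specific inputs, precisely the dimension-adaptive behavior of $\phi_{f_0}$ for $\gpsv$ and $\gplp$, are exactly the fundamental results I would establish in Section~\ref{basic}; they are shared essentially verbatim across all four models and are where the intrinsic dimension $d_1$ (resp.\ $d_0$) enters in place of the ambient $d$.

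The model-specific work, then, is only to translate the abstract Hellinger statement into the claimed $\|\cdot\|_{G_x}$ statement by comparing the statistical distances on $\{p_f\}$ with the geometric distances on $\{f\}$. First I would verify the prior-mass condition: on the set $\{\|f - f_0\|_\infty < \epsilon\}$, both $\Phi(f)$ and $\Phi(f_0)$ stay bounded away from $0$ and $1$, since $f_0$ is continuous on the compact $\bbU_d$ and hence bounded, and $f$ is uniformly close. Consequently the per-observation Kullback--Leibler divergence $K(p_{f_0},p_f)$ and its second variation are both bounded by a constant multiple of $\|f-f_0\|_\infty^2$; this uses only that $\Phi$ is smooth with a strictly positive, locally bounded density, and it holds for either the probit or the logistic link. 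Thus a sup-norm ball of radius proportional to $\epsilon_n$ is contained in the KL-type neighborhood of size $\epsilon_n^2$ demanded by the theory, and the concentration function lower-bounds its prior mass. Second, for the reverse direction I would establish a local lower bound $h(p_f, p_{f_0}) \gtrsim \|f - f_0\|_{G_x}$ valid whenever $\|f\|_\infty$ is bounded, again because $\Phi'$ is bounded below on compact sets of its argument.

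With these comparisons in hand the argument closes exactly as for Theorem~\ref{thm reg}: substituting the concentration-function bound from Section~\ref{basic} into $\phi_{f_0}(\epsilon_n) \le n \epsilon_n^2$ and solving yields $\epsilon_n = n^{-\alpha/(2\alpha + d_1)}(\log n)^k$ in the selection case and $n^{-\alpha/(2\alpha + d_0)}(\log n)^k$ in the projection case, the sieve and entropy conditions being met by the same construction. The general theorem then gives Hellinger contraction at rate $\epsilon_n$; intersecting the Hellinger-$\epsilon_n$ ball with the sieve, on which $\|f\|_\infty$ is controlled and hence $h \gtrsim \|\cdot\|_{G_x}$ holds, while discarding the sieve complement whose posterior mass vanishes, converts this into $\|\cdot\|_{G_x}$ contraction at rate $\epsilon_n$, which is the assertion.

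The main obstacle is not in this classification-specific layer, where the map $f \mapsto \Phi(f)$ is well behaved, but in the deferred GP analysis: showing that equipping the rescaled square-exponential process with a prior over selections $b$ or rotations $q$ produces a concentration function scaling with the \emph{intrinsic} dimension rather than $d$. The delicate point there is that the rescaling prior must be coordinated with the random low-dimensional structure, so that when $f_0$ depends only on $Rx$ the mass assigned to the correct low-rank projection, together with the small-ball and RKHS-approximation estimate on that subspace, dominates; controlling this and the accompanying sieve entropy uniformly over $q \in \scO_d$ (which for the projection case forces the restriction $\alpha > 1$) is the genuinely hard part, and it is carried out once, in Section~\ref{basic}, for use in all four theorems.
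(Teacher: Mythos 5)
Your overall route coincides with the paper's: there, Theorem \ref{thm class} is proved by invoking the reduction of \cite{vandervaart&vanzanten08} --- posterior contraction in the classification model follows once the sup-norm conditions (\ref{c1})--(\ref{c3}) hold for $\tilde W$ with law $\gpsv$ or $\gplp$ --- and then verifying (\ref{c1})--(\ref{c3}) in Section \ref{basic} (Theorem \ref{vs} for selection and the analogous projection theorem). Your prior-mass step, which fills in part of that citation, is correct: on $\{\|f - f_0\|_\infty < \epsilon\}$ the success probabilities stay in a compact subinterval of $(0,1)$, so the Kullback--Leibler divergence and its second moment are bounded by a constant times $\|f - f_0\|_\infty^2$, and (\ref{c1}) lower-bounds the prior mass of the required KL neighborhood.

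The genuine gap is in your final step, converting Hellinger contraction into contraction of $\|f - f_0\|_{G_x}$. You claim that on the sieve ``$\|f\|_\infty$ is controlled and hence $h \gtrsim \|\cdot\|_{G_x}$ holds,'' but the sieves that make (\ref{c2})--(\ref{c3}) work are of the form $(r_n/\delta_n)^{|b|/2}M_n\bbH^{r_n,b}_1 + \epsilon_n\bbB_1$ (plus small-$a$ pieces) with $M_n, r_n$ growing polynomially in $n$; hence the sup-norm of $f \in \scB_n$ is only bounded by some $L_n$ that is a power of $n$. The constant in the comparison $h(p_f,p_{f_0}) \ge c\,\|f - f_0\|_{G_x}$ is governed by $\inf_{|u|\le L_n}\Phi'(u)$, which decays like $e^{-L_n^2/2}$ (probit) or $e^{-L_n}$ (logistic), so the inequality only yields $\|f - f_0\|_{G_x} \le C e^{L_n^2/2}\epsilon_n$, which is vacuous. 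You also cannot repair this by shrinking the sieve to a fixed sup-norm ball, since then $P(\tilde W \notin \scB_n)$ is a fixed positive number and (\ref{c2}) fails. What your argument honestly delivers is contraction at rate $\epsilon_n$ of $\|\Phi(f) - \Phi(f_0)\|_{G_x}$, i.e.\ on the response-probability scale --- which is precisely the metric in which \cite{vandervaart&vanzanten08} state their classification theorem, and is the form in which the paper's citation applies; passing from that to the latent scale $\|f - f_0\|_{G_x}$ requires an additional idea that neither you nor, strictly speaking, the paper's one-line reduction supplies. A minor side point: you attribute the restriction $\alpha > 1$ in the projection case to the uniform-in-$q$ entropy control, but in the paper it is needed for prior concentration --- the bound $\|f_0 - f_q\|_\infty \le D_2\|q_0 - q\|_S$ requires $f_0$ to be Lipschitz --- whereas the entropy step (via Lemma \ref{q lem}) needs no smoothness of $f_0$ at all.
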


\subsection{Density or point pattern intensity estimation}

Consider observations $X_i \in \bbU_d$, $i = 1, 2, \cdots$ modeled as independent draws from a probability density $g$ on $\bbU_d$ that can be written as
$$g(x) = \frac{g^*(x) \exp(f(x))}{\int_{\bbU_d} g^*(z)\exp(f(z))dz}$$
for some fixed, non-negative function $g^*$ and some unknown $f:\bbU_d \to \rline$. This type of models also arise in analyzing spatial point pattern data with non-homogeneous Poisson process models where the intensity function is expressed as $g^*(x)\exp(f(x))$. Assume $f$ is assigned the prior distribution $\Pi_f$ which is either $\gpsv$ or $\gplp$, and let $\Pi_g$ denote the induced prior distribution on $g$. The corresponding posterior distribution based on $X_1, \cdots, X_n$ is given by
$$\Pi^n_g(dg) = \frac{\prod_{i = 1}^n g(X_i)\Pi_g(dg)}{\int \prod_{i = 1}^n g(X_i)\Pi_g(dg)}.$$
Let $h(g_1, g_2) = \{\int_{\bbU_d} (g^{1/2}_1(x) - g^{1/2}_2(x))^2 dx\}^{1/2}$ denote the Hellinger metric. 

Consider a sequence $(\epsilon_n > 0: n \ge 1)$ as before. For any fixed density $g_0$ on $\bbU_d$, we say the posterior converges at $g_0$ at a rate $\epsilon_n$ (or faster) if for some $M > 0$,
$$\plim_{n \to \infty} \Pi^n_g(\{g : h(g, g_0) \ge M\epsilon_n\}) = 0$$
whenever $X_i$'s are independent draws from $g_0$.

\begin{theorem}
\label{thm denest}
Let $g_0$ be a probability density on $\bbU_d$ satisfying $g_0(x) = g^*(x)e^{f_0(x)} / \int g^*(z)e^{f_0(z)}dz$ for some H\"older $\alpha$-smooth $f_0 :\bbU_d \to \rline$. If $f_0(x)$ depends on $x$ only through $d_1 \le d$ many coordinates of $x$ and $\Pi_f = \gpsv$, the posterior converges at $g_0$ at a rate $\epsilon_n = n^{-\alpha / (2\alpha + d_1)}(\log n)^k$ for every $k > d + 1$. Furthermore, if $\alpha > 1$, $f_0(x)$ depends on $x$ only through a rank-$d_0$ linear projection $R x$  and $\Pi_f = \gplp$, the posterior converges at $g_0$ at a rate $\epsilon_n = n^{-\alpha /(2\alpha + d_0)}(\log n)^k$ for every $k > d + 1$.
\end{theorem}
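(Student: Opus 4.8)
The plan is to verify the three standard conditions for posterior contraction in density estimation under the Hellinger metric, namely the Kullback--Leibler prior-mass condition together with a sieve condition bounding metric entropy and a condition bounding the prior mass outside the sieve. My first step is to eliminate the nonlinear map $f \mapsto g_f$, where $g_f(x) = g^*(x)e^{f(x)}/\int g^*(z)e^{f(z)}dz$. Since $g^*$ is fixed and nonnegative, the standard bounds of \citet{vandervaart&vanzanten08} give $h(g_{f_0}, g_f) \lesssim \|f - f_0\|_\infty e^{\|f-f_0\|_\infty/2}$, and the Kullback--Leibler divergence and variance of $\log(g_{f_0}/g_f)$ are likewise $\lesssim \|f-f_0\|_\infty^2$ on sets where $\|f-f_0\|_\infty$ stays bounded. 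Hence it suffices to establish three sup-norm statements for the law $\Pi_f$ of $f$: a lower bound $\Pi_f(\|f-f_0\|_\infty \le \epsilon_n) \ge e^{-c n \epsilon_n^2}$, existence of sieves $\mathcal F_n \subset \rline^{\bbU_d}$ with $\log N(\epsilon_n, \mathcal F_n, \|\cdot\|_\infty) \lesssim n\epsilon_n^2$, and $\Pi_f(\mathcal F_n^c) \le e^{-c' n \epsilon_n^2}$ with $c'$ large. These are precisely the quantities governed by the concentration function of the rescaled square exponential process, and I will supply them from the fundamental Gaussian process estimates developed in Section~\ref{basic}.

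For the variable-selection prior \gpsv{} the rate is driven entirely by the prior-mass bound, and this is where dimension reduction enters. Let $b^\star \in \{0,1\}^d$ select exactly the $d_1$ coordinates on which $f_0$ depends, and write $f_0(x)=\phi_0(x_{b^\star})$ for the induced $d_1$-dimensional function. On the event $B = b^\star$ the process $W^{A, b^\star, I_d}(x)=W(\diag(Ab^\star)x)$ is, in the relevant coordinates, a $d_1$-dimensional rescaled square exponential process with $A^{d_1} \sim \gam(a_1, a_2)$; this is exactly the dimension-matched rescaling prior of \citet{vandervaart&vanzanten09}, so the $d_1$-dimensional concentration function is bounded at $\epsilon_n = n^{-\alpha/(2\alpha+d_1)}(\log n)^k$ by $c\, n \epsilon_n^2$, and the positive factor $\pi_B(b^\star)$ contributes only the harmless constant $-\log \pi_B(b^\star)$. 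For the sieve and remaining-mass conditions I will take $\mathcal F_n = \bigcup_{b} \mathcal F_n^b$, a union over all $2^d$ selections, where $\mathcal F_n^b$ is the sieve for the $|b|$-dimensional process with rescaling truncated at $\bar a_b := (n\epsilon_n^2)^{1/|b|}$. The key observation is that, conditionally on $B = b$, the rescaling has $A^{|b|} \sim \gam(a_1, a_2)$, so both the truncation tail $\Pi_f(A > \bar a_b \mid B = b) \asymp e^{-a_2 \bar a_b^{|b|}}$ and the entropy $\log N \asymp \bar a_b^{|b|}(\log n)^{\cdots}$ scale with the same power $\bar a_b^{|b|} \asymp n\epsilon_n^2$; this makes each piece self-consistent at the target rate irrespective of $|b|$, and summing over the finitely many $b$ preserves the bounds. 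The high-dimensional pieces need not contain good approximations to $f_0$ --- they only have to be thin in entropy and light in mass --- so truncating their rescaling more aggressively costs nothing.

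The projection prior \gplp{} is handled along the same lines, but the continuous mixture over $\scO_d$ must be treated with care, and this is the step I expect to be the \emph{main obstacle}. Writing $R = (q^\star)'\diag(b^\star) q^\star$ for an aligned pair $(b^\star, q^\star)$ with $|b^\star| = d_0$ and $f_0(x)=\phi_0(Rx)$, the difficulty is that the set of $q$ with $q'\diag(b^\star)q = R$ exactly has $\pi_Q$-measure zero, so the prior-mass argument cannot condition on perfect alignment. Instead I will work on a neighborhood of $q^\star$ of radius $\eta_n$ and absorb the misalignment into the approximation error: for $q$ within $\eta_n$ of $q^\star$ one has $\|q'\diag(b^\star)q - R\| \lesssim \eta_n$, whence $|\phi_0(q'\diag(b^\star)q\,x) - \phi_0(Rx)| \lesssim \eta_n$ uniformly on $\bbU_d$. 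This first-order control of the misalignment is exactly why $\alpha > 1$ is imposed: H\"older smoothness beyond order one makes $\phi_0$ continuously differentiable with a bounded gradient, so the approximant built at scale $a$ inherits an $O(1)$ gradient and the perturbation is linear in $\eta_n$ rather than degrading with $a$. Choosing $\eta_n$ a suitable power of $\epsilon_n$, the neighborhood carries $\pi_Q$-mass of order $\eta_n^{\dim \scO_d}$, a polynomial factor that enters the prior-mass bound only through an additional power of $\log n$, consistent with the stated exponent $k > d+1$. With the decentering element of the reproducing kernel Hilbert space constructed on this neighborhood so as to approximate $\phi_0(R\cdot)$ in sup norm while keeping its Hilbert norm at the $d_0$-dimensional order, and with the sieve and remaining-mass conditions handled by the same dimension-matched truncation $\bar a_b^{|b|}\asymp n\epsilon_n^2$ as before, the three conditions follow at the rate $n^{-\alpha/(2\alpha+d_0)}(\log n)^k$.
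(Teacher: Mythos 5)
Your overall architecture matches the paper's proof: reduce Theorem \ref{thm denest} to conditions (\ref{c1})--(\ref{c3}) on the law of $\tilde W$ via \citet{vandervaart&vanzanten08}; for $\gpsv$ get the prior-mass bound by conditioning on $B = b^\star$ and invoking the dimension-matched rescaling result of \citet{vandervaart&vanzanten09} (this is Theorem \ref{vs}), with the sieve taken as a finite union over all $b \in \{0,1\}^d$ of dimension-matched truncated sets; for $\gplp$ handle the prior mass by working on a spectral-norm neighborhood of $q^\star$ whose $\pi_Q$-mass is polynomial in $\tilde\epsilon_n$, using $\alpha > 1$ to get Lipschitz control $\|f_0 - f_q\|_\infty \lesssim \|q^\star - q\|_S$ of the misalignment. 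All of this is exactly what the paper does.

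However, there is a genuine gap in your treatment of the projection prior: you flag the continuum $\scO_d$ as the main obstacle but then resolve only its prior-mass half, asserting that ``the sieve and remaining-mass conditions [are] handled by the same dimension-matched truncation as before.'' That does not go through. The sieve for $\gplp$ must capture realizations of $W^{A,B,Q}$ for every $q$ in the support of $\pi_Q$, so it is effectively $\scB^b_n = \cup_{q \in \scO_d}\scB^{b,q}_n$, a union over a \emph{continuum}. In the variable-selection case the union was over the $2^d$ values of $b$, so the entropy of the union exceeded the worst single-piece entropy by only an additive $\log 2^d$; no such trivial bound exists for an uncountable union, and condition (\ref{c3}) is exactly the place where the argument can fail. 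The paper closes this hole with a dedicated stability result, Lemma \ref{q lem}: $\bbH^{a,b,q}_1 \subset \bbH^{a,b,\tq}_1 + a\sqrt d\,\|q - \tq\|_S\,\bbB_1$, which implies $\scB^{b,q}_n \subset \scB^{b,\tq}_n + \epsilon_n\bbB_1$ whenever $\tq$ lies within $\zeta_n = \epsilon_n/\{M_n r_n \sqrt d\,(r_n/\delta_n)^{|b|/2}\}$ of $q$; covering $\scO_d$ by a $\zeta_n$-net $\scQ_n$ of polynomial size $|\scQ_n| \lesssim \zeta_n^{-d(d-1)/2}$ then gives
$$\log N((K + 1)\epsilon_n, \scB^b_n, \|\cdot\|_\infty) \le \log \max_{\tq \in \scQ_n} N(K\epsilon_n, \scB^{b,\tq}_n, \|\cdot\|_\infty) + \log|\scQ_n| \lesssim n\epsilon_n^2(\log n)^{1 + |b|}.$$
Without this RKHS-perturbation lemma (or an equivalent uniform-in-$q$ stability estimate), your entropy condition for $\gplp$ is unproven; it is precisely the new technical ingredient that distinguishes the projection case from the variable-selection case, and your proposal as written omits it.
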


In the above theorem, the conditions on $f_0$ are equivalent to saying that $g_0(x) / g^*(x)$ varies in $x$ only along a linear subspace of the variable $x$. In the context of two dimensional point patter models, this implies that the intensity function, relative to $g^*$, is constant over the spatial domain or constant along a certain direction.

\subsection{Density regression}

Consider again observations $(X_i, Y_i) \in \bbU_d \times \rline$, $i = 1, 2, \cdots$ where we want to develop a regression model between $Y_i$'s and $X_i$'s.  In density regression the entire conditional density, and not just the conditional mean of $Y_i$ given $X_i$ is modeled nonparametrically. \cite{tokdar&etal10} consider the model $Y_i | X_i \sim g(\cdot | X_i)$, independently across  $i$, where the conditional densities $g(\cdot | x)$, $x \in \bbU_d$ are given by point by point logistic transforms of a function $f : \bbU_d \times [0,1] \to \rline$:
\beq
g(y | x) = \frac{g^* (y)\exp\{f(x, G^* (y))\}}{\int_{-\infty}^\infty g^* (z) \exp\{f(x,  G^* (z))\}dz},\;\;y \in \rline,
\label{den reg def}
\eeq
for some fixed probability density $g^* $ on $\rline$ with cumulative distribution function $G^* $. To construct a suitable prior distribution for $f$, we consider an extension of the process $W^{a, b, q}$. 

Let $Z = (Z(t, u): t \in \rline^d, u \in [0,\infty))$ be a separable, zero mean Gaussian process with isotropic, square-exponential covariance function $\expct \{Z(t, u)Z(s, v)\} = \exp( - \|t - s\|^2 - \|u - v\|^2)$. Define $Z^{a,b,q} = (Z^{a,b,q}(x, u): x \in \bbU_d, u \in [0,1])$ as
\beq Z^{a,b,q}(x, u) = Z(\diag(ab) \cdot qx, au).\eeq
Let $\gpsvstar$ and $\gplpstar$, respectively, denote the laws of the processes $Z^{A, B, Q}$ and $Z^{A, B, I_d}$ where $(B, Q)$ are distributed as in (\ref{abq dist}) and $A^{|b| + 1} | (B = b, Q) \sim \gam(a_1,a_2)$.

Now suppose $f$ in (\ref{den reg def}) is assigned the prior distribution $\Pi_f$ which is either $\gpsvstar$ or $\gplpstar$, and denote the induced prior distribution on $g = (g(y|x): x \in \bbU_d, y \in \rline)$ by $\Pi_g$. The corresponding posterior distribution $\Pi^n_g$ based on $(X_1, Y_1), \cdots, (X_n, Y_n)$ is given by
$$\Pi_g^n(dg) = \frac{\{\prod_{i = 1}^n g(Y_i | X_i)\}\Pi_g(dg)}{\int \{\prod_{i = 1}^n g(Y_i | X_i)\}\Pi_g(dg)}.$$
Let $\rho_{G_x}(\cdot, \cdot)$ denote the metric $\rho^2_{G_x}(g_1, g_2) = \int \{g^{1/2}_1(y|x) - g^{1/2}_2(y|x)\}^2 G_x(dx)$ for a probability distribution $G_x$ on $\bbU_d$. 

Consider a sequence $(\epsilon_n > 0: n \ge 1)$ as before. For any fixed $g_0 = (g_0(y|x): x \in \bbU_d, y \in \rline)$, we say the posterior converges at $g_0$ at a rate $\epsilon_n$ (or faster) if  for some $M > 0$, 
$$\plim_{n \to \infty} \Pi^n_g(\{g : \rho_{G_x}(g, g_0) \ge M \epsilon_n\}) = 0$$ whenever $Y_i | X_i \sim g_0(\cdot|X_i)$ and $X_i \sim G$, independently across $i \ge 1$.

\begin{theorem}
\label{thm denreg}
Let $g_0 = (g_0(y|x): x \in \bbU_d, y \in \rline)$ satisfy 
$$g_0(y | x) = \frac{g^*(y) \exp\{f_0(x, G^*(y))\} }{ \int g^*(z) \exp\{f_0(x, G^*(z))\} dz}$$
for an $f_0:\bbU_d\times[0, 1] \to \rline$ that is H\"older $\alpha$-smooth. If $f_0(x, u)$ depends on $x$ only through $d_1 \le d$ many coordinates of $x$ and $\Pi_f = \gpsvstar$, the posterior converges at $g_0$ at a rate $\epsilon_n = n^{-\alpha / (2\alpha + d_1 + 1)}(\log n)^k$ for every $k > d + 2$. Furthermore, if $\alpha > 1$, $f_0(x, u)$ depends on $x$ only through a rank-$d_0$ linear projection $R x$  and $\Pi_f = \gplpstar$, the posterior converges at $g_0$ at a rate $\epsilon_n = n^{-\alpha/(2\alpha + d_0 + 1)} (\log n)^k$ for every $k > d + 2$.  
\end{theorem}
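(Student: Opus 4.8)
The plan is to deduce Theorem \ref{thm denreg} from the general theory of posterior contraction for independent observations, exactly as in the density estimation case of Theorem \ref{thm denest}, but carried out on the augmented domain $\bbU_d \times [0,1]$ in which the $u$-coordinate is always active. Writing $p_f(y,x) = g(y\mid x)\,G_x(dx)$ for the joint law of a single pair $(X_i, Y_i)$, the pairs are i.i.d., so it suffices to verify the three standard conditions --- a prior-mass (Kullback--Leibler) condition, a sieve-entropy condition, and an exponentially small prior mass on the sieve complement --- relative to the metric $\rho_{G_x}$, which is precisely the Hellinger distance between the joint densities $p_f$ and $p_{f_0}$. The first reduction is therefore to a lemma, identical in spirit to the one underlying Theorem \ref{thm denest}, stating that for the logistic transform \eqref{den reg def} there is a constant $C$ with $\rho_{G_x}^2(g,g_0) \le C\|f-f_0\|_\infty^2$ and with the Kullback--Leibler divergence and the variance of $\log(g_0/g)$, integrated against $G_x$, both bounded by $C\|f-f_0\|_\infty^2$ whenever $\|f-f_0\|_\infty$ is bounded. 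Here $\|\cdot\|_\infty$ is the supremum norm over $(x,u)\in\bbU_d\times[0,1]$. This collapses every condition to statements about the sup-norm behaviour of $Z^{A,B,Q}$ (resp.\ $Z^{A,B,I_d}$) around $f_0$.

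Next I would invoke the concentration-function and entropy estimates of Section \ref{basic} \citep{vandervaart&vanzanten09}, now applied to the process $Z^{a,b,q}$ on the $(d+1)$-dimensional domain $\bbU_d\times[0,1]$. The crucial point is that the last coordinate $u$ is rescaled by $A$ in $Z^{a,b,q}(x,u)=Z(\diag(ab)qx, au)$ irrespective of $b$, so the process is always genuinely $(|b|+1)$-dimensional; this is reflected in the prior $A^{|b|+1}\mid(B=b,Q)\sim\gam(a_1,a_2)$, whose extra exponent makes the rescaling behave in effective dimension $|b|+1$. For the selection statement, $\pi_B$ places positive mass on the subset $b^\star$ of the $d_1$ active coordinates; conditioning on $B=b^\star$ reduces the problem to a rescaled square-exponential process in dimension $d_1+1$ approximating an $\alpha$-smooth $f_0$, and the concentration bound of Section \ref{basic} yields $\Pi_f(\|f-f_0\|_\infty\le \epsilon_n)\ge e^{-c\, n\epsilon_n^2}$ with $\epsilon_n = n^{-\alpha/(2\alpha+d_1+1)}(\log n)^{k}$. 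For the projection statement I would condition on $B$ selecting all $d$ coordinates and on $Q$ lying in a small neighbourhood of the true $R$; the positive density of $\pi_Q$ supplies the required prior mass and the effective dimension is $d_0+1$.

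The sieve and remaining-mass conditions are then verified by the same construction as in Theorem \ref{thm denest}: take the sieve to be a union, over $b$ and a fine net of $q$'s, of the usual rescaled-GP sieves (sup-norm balls inflated by multiples of the reproducing-kernel Hilbert space ball of the rescaled process), truncated at bandwidths $A\le a_n$ for a slowly growing $a_n$. Section \ref{basic} supplies a log-covering number $\lesssim n\epsilon_n^2$ in $\|\cdot\|_\infty$ and exponentially small prior mass on the complement; the sup-norm entropy dominates the $\rho_{G_x}$-entropy through the reduction lemma above, which also supplies the tests. The enlarged logarithmic budget $k>d+2$ (rather than $k>d+1$) arises because the ambient dimension is now $d+1$: summing over the $2^d$ values of $b$ and over the net on $\scO_d$ while controlling the $(d+1)$-dimensional small-ball exponent raises the power of $\log n$ by one relative to Theorems \ref{thm reg}--\ref{thm denest}.

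The main obstacle I expect is the projection half, where two points need care beyond the density-estimation argument. First, only the $x$-argument is projected while $u$ is left intact, so one must bound, for $\hat R$ near $R$, the sup-norm error incurred by replacing $f_0(Rx,u)$ with its best approximation depending on $(\hat R x, u)$; controlling this by a suitable power of $\|\hat R - R\|$ is exactly where the hypothesis $\alpha>1$ is used, since it furnishes the Lipschitz-type modulus of $f_0$ needed to absorb the perturbation of the projection into the $\epsilon_n$-budget. Second, the logistic normalization in \eqref{den reg def} couples all values of $f$ through the integral in the denominator; one must check that the sup-norm reduction lemma survives this coupling, which it does because the log normalizing constant is a $1$-Lipschitz functional of $f$ in sup-norm. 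Once these are in hand, the general contraction theorem delivers the stated rate.
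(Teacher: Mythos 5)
Your overall route is the same as the paper's: map the posterior-contraction conditions for the i.i.d.\ pairs $(X_i,Y_i)$ to the sup-norm conditions (\ref{c1})--(\ref{c3}) via the joint density determined by $G_x$ (the paper does this along the lines of Theorem 3.2 of van der Vaart and van Zanten, 2008), and then verify (\ref{c1})--(\ref{c3}) for $Z^{A,B,I_d}$ and $Z^{A,B,Q}$ on the $(d+1)$-dimensional domain, where the $u$-coordinate is always active and the exponent $|b|+1$ in the $\gam(a_1,a_2)$ prior on $A^{|b|+1}$ does exactly the bookkeeping you describe (this is the content of Corollaries \ref{vs1} and \ref{lp1}). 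Your selection half, your sup-norm reduction lemma, the sieve built as a union over $b$ and a net on $\scO_d$, and your accounting for the extra power of $\log n$ all agree with the paper.

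However, the projection half contains a genuine error: you condition on $B$ selecting \emph{all} $d$ coordinates and on $Q$ near the truth. With $b=(1,\dots,1)$ one has $\diag(ab)\cdot qx = aqx$, and since $W$ is isotropic, $W^{a,b,q}$ then has the same law as $W^{a,b,I_d}$ --- the rotation $Q$ is invisible in law, so conditioning on it buys nothing. The process conditioned on full $B$ is a genuinely $d$-dimensional (plus one for $u$) rescaled GP, and its small-ball exponent at $f_0$ scales with dimension $d+1$ regardless of the fact that $f_0$ happens to depend only on $Rx$; the prior-mass condition (\ref{c1}) then holds only with $\tilde\epsilon_n \sim n^{-\alpha/(2\alpha+d+1)}$ up to logarithms, not the claimed $n^{-\alpha/(2\alpha+d_0+1)}$, so your argument cannot deliver the stated rate when $d_0<d$. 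The fix is the paper's conditioning: write $R=q_0'\diag(b_0)q_0$ with $|b_0|=d_0$, condition on $B=b_0$ (so the process collapses to the $d_0$ coordinates $(Qx)_{b_0}$ together with $u$) \emph{and} on $Q$ in a spectral-norm ball of radius proportional to $\tilde\epsilon_n$ around $q_0$; the hypothesis $\alpha>1$ gives $\|f_0-f_q\|_\infty \le D_2\|q-q_0\|_S$, and the prior probability of that spectral ball, of order $\tilde\epsilon_n^{d(d-1)/2}$, is absorbed into (\ref{c1}) because it exceeds $e^{-n\delta_n^2}$. With that replacement (and Lemma \ref{q lem} powering the entropy bound over the $q$-net, as you already have), your outline matches the paper's proof.
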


\section{Adaptation properties of GP extensions}
\label{basic}

\cite{ghosal&etal00}, later refined by \cite{ghosal.vandervaart07b}, provide a set of three sufficient conditions that can be used to establish posterior convergence rates for Bayesian non-parametric models for independent observations. One of these conditions relates to prior concentration at the true function, and the other two relate to existence of a sequence of compact sets which have relatively small sizes but receive large probabilities from the prior distribution. 

For the results stated in Theorems \ref{thm reg}, \ref{thm class} and \ref{thm denest} relating respectively to mean regression, classification or density estimation, these three sufficient conditions map one to one \citep{vandervaart&vanzanten08} to the following conditions on an extended GP $\tilde W$ with law $\gpsv$ or $\gplp$ as appropriate, the true function $f_0$ and the desired rate $\epsilon_n$: there exist sets $\scB_n \subset \rline^{\bbU_d}$ and a sequence $(\tilde\epsilon > 0: n \ge 1)$ with $\tilde\epsilon_n < \epsilon_n$, $\lim_{n \to \infty} n\tilde \epsilon_n^2 = \infty$ such that for all sufficiently large $n$,
\begin{align}
P(\|\tilde W - f_0\|_\infty \le \tilde\epsilon_n) & \ge e^{-n\tilde\epsilon_n^2},\label{c1}\\
P(\tilde W \not\in \scB_n) & \le e^{-4n\tilde\epsilon_n^2},\label{c2}\\
\log N(\epsilon_n, \scB_n, \|\cdot\|_\infty) & \le n \epsilon_n^2,\label{c3}
\end{align}
where $N(\epsilon, B, \rho)$ denotes the minimum number of balls of radius $\epsilon$ (with respect to a metric $\rho$) needed to cover a set $B$. For the density regression results stated in Theorem \ref{thm denreg}, the sufficient conditions also map one to one to the above but with $\tilde W$ now following either $\gpsvstar$ or $\gplpstar$ and with $\scB_n \subset \rline^{\bbU_d \times [0,1]}$. This can be proved along the lines of Theorem 3.2 of \citet{vandervaart&vanzanten08}, by looking at the joint density of $(X_i, Y_i)$ determined by $G_x$ and $g \sim \Pi_g$.

We verify these conditions in the following subsections by extending the calculations presented in \cite{vandervaart&vanzanten09}. A fundamental ingredient of these calculations is the reproducing kernel Hilbert space (RKHS) associated with a Gaussian process. The RKHS of $W$ is defined as the set of functions $h : \rline^d \to \rline$ that can be represented as $h(t) = \expct \{W(t) L\}$ for some $L$ in the closure of $\{V = a_1 W(t_1) + \cdots + a_k W(t_k) : k \ge 1, a_i \in \rline, t_i \in \rline^d\}$. Similar definitions apply to the processes $W^{a,b,q}$ and $Z^{a,b,q}$ with domains $\bbU_d$ and $\bbU_d \times [0,1]$ instead of $\rline^d$.

In Lemma 4.1 of \cite{vandervaart&vanzanten09}, the RKHS of $W$ is identified as the set of functions $h$ such that $h(t) = Re\{\int e^{i(\lambda, t)}\psi(\lambda)d\mu(\lambda)\}$ for some $\psi \in L_2(\mu)$, where $Re\{z\}$ denotes the real part of a complex number $z$, $i$ is the square root of $-1$ and $\mu$ is the (unique) spectral measure on $\rline^d$ of $W$, satisfying $\expct\{W(t)W(s)\} = \int e^{-i(t - s, \lambda)}d\mu(\lambda)$. For the isotropic, square exponential GP $W$, the spectral measure is  the $d$-dimensional Gaussian probability measure with mean zero and variance matrix $2I_d$. The RKHS norm of such an $h$ is precisely $\|\psi\|_{L_2(\mu)}$. 

By simple change of variables it follows that the RKHS of $W^{a,b,q}$, for any $a > 0, b \in \{0,1\}^d$ and $q \in \scO_d$, is given by functions $h$ such that $h(x) = Re\{\int e^{i(\lambda, x)} \psi(\lambda)d\mu_{a,b,q}(\lambda)\}$ with RKHS norm $\|\psi\|_{L_2(\mu_{a,b,q})}$ where $\mu_{a,b,q}$ is the $d$-dimensional Gaussian probability measure with mean 0 and variance matrix $2a^2 q'\diag(b)q$. In the rest of the paper this RKHS is denoted $\bbH^{a,b,q}$ and $\bbH^{a,b,q}_1$ is used to denote its unit ball at the origin. Also, $\bbB$ is used to denote the Banach space of continuous functions on $\bbU_d$ equipped with the supremum norm $\|\cdot\|_\infty$. The unit ball at origin of this space is denoted $\bbB_1$.

\subsection{Variable selection extension}
\label{cov selec}

To start with let $W^{a,b}$ denote $W^{a,b,q}$ with $q$ fixed at the $d$-dimensional identity matrix and let $\bbH^{a,b}$ stand for the corresponding RKHS $\bbH^{a,b,q}$. For any $b \in \{0,1\}^d$ and $x \in \bbU_d $ let $x_b$ denote the $|b|$-dimensional vector of coordinates of $x$ selected by $b$. Also for any $\alpha > 0$ and any $\tilde d \in \{0, \cdots, d\}$ let $H_{\alpha, \td}$ denote the class of H\"older $\alpha$-smooth, real functions on $\bbU_{\td}$. Notice that if $f_0(x)$ depends only on $d_1$ many coordinates of $x$ then there exist $b_0 \in \{0,1\}^d$ with $|b_0| = d_1$ and $v_0 \in H_{\alpha, d_1}$ such that $f_0(x) = v_0(x_{b_0})$ for all $x \in \bbU_d$. 

\begin{theorem}
\label{vs}
Let $f_0 : \bbU_d  \to \rline$ satisfy $f_0(x) = v_0(x_{b_0})$ for some $b_0 \in \{0,1\}^d$ with $|b_0| = d_1$ and some $v_0 \in H_{\alpha, d_1}$. Then for every $s > 0$, there exist measurable subsets $\scB_n \subset \rline^{\bbU_d}$ and a constant $K > 0$ such that (\ref{c1})-(\ref{c3}) hold with $\tilde W = W^{A,B}$, $\tilde \epsilon_n = n^{-\alpha / (2\alpha + d_1)}(\log n)^{(d + 1)/2 + s}$ and $\epsilon_n = K \tilde \epsilon_n (\log n)^{(d + 1)/2}$.
\end{theorem}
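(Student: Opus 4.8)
The plan is to verify (\ref{c1})--(\ref{c3}) by reducing the computations, conditional on each value of $B$, to the fixed-dimension estimates of \citet{vandervaart&vanzanten09}, exploiting two structural facts. First, since $f_0(x)=v_0(x_{b_0})$ with $|b_0|=d_1$, the concentration bound (\ref{c1}) should be driven entirely by the event $\{B=b_0\}$, which carries the fixed positive mass $\pi_B(b_0)$. Second, conditional on $B=b$ with $|b|=\tilde d$, the spectral measure $\mu_{a,b}$ is a degenerate $d$-dimensional Gaussian supported on the $\tilde d$-dimensional coordinate subspace selected by $b$, so $\bbH^{a,b}$ is isometric to the RKHS of a $\tilde d$-dimensional rescaled square exponential process and every relevant computation collapses to dimension $\tilde d$, with the rescaling entering only through the power $a^{\tilde d}$. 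Because $\{0,1\}^d$ is finite with $2^d$ elements, unions over $b$ in the sieve and sums over $b$ in the escape probabilities cost only factors of $2^d$, which are absorbed into constants.

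For (\ref{c1}) I would condition on $B=b_0$ and note that as $x$ ranges over $\bbU_d$ the selected coordinates $x_{b_0}$ range over $\bbU_{d_1}$, so $\|W^{A,b_0}-f_0\|_\infty$ equals the supremum distance between a $d_1$-dimensional rescaled GP, with $A^{d_1}\sim\gam(a_1,a_2)$, and $v_0\in H_{\alpha,d_1}$. The left side of (\ref{c1}) is then at least $\pi_B(b_0)$ times a $d_1$-dimensional small-ball probability, which by the concentration-function bound of \citet{vandervaart&vanzanten09} is at least $e^{-\phi_{v_0}(\tilde\epsilon_n)}$ with $\phi_{v_0}(\tilde\epsilon_n)\le n\tilde\epsilon_n^2$ whenever the logarithmic power of $\tilde\epsilon_n$ is at least $(1+d_1)/(2+d_1/\alpha)$. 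Since $(1+\tilde d)/(2+\tilde d/\alpha)\le(d+1)/2$ for every $\tilde d\le d$, the prescribed power $(d+1)/2+s$ dominates with strict room to spare, and the slack $s>0$ absorbs $\pi_B(b_0)$ together with the multiplicative constants, giving (\ref{c1}).

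For the sieve I would take $\scB_n=\bigcup_{b\in\{0,1\}^d}\scB_n^b$ with $\scB_n^b=M_n\bbH_1^{\bar a_n^{(b)},b}+\epsilon_n\bbB_1$, choosing the truncation dimension-by-dimension so that $(\bar a_n^{(b)})^{|b|}\asymp n\tilde\epsilon_n^2$ and the radius $M_n$ of order $\sqrt n\,\tilde\epsilon_n$ up to logarithmic factors. For (\ref{c2}) I split the escape event for each $b$ into $\{A>\bar a_n^{(b)}\}$, whose probability is a Gamma tail of $A^{|b|}$ of order $\exp(-ca_2(\bar a_n^{(b)})^{|b|})=\exp(-c'n\tilde\epsilon_n^2)$, and the Gaussian piece, bounded via Borell's inequality and the decentering argument of \citet{vandervaart&vanzanten09} by $1-\Phi(M_n+\alpha_n)\le\exp(-(M_n+\alpha_n)^2/2)\le e^{-4n\tilde\epsilon_n^2}$ for the stated $M_n$, where $\alpha_n=\Phi^{-1}(P(\|W^{a,b}\|_\infty\le\epsilon_n))$ is the centered small-ball correction. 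Choosing constants so each term is below $2^{-d-1}e^{-4n\tilde\epsilon_n^2}$ and summing over $b$ yields (\ref{c2}). For (\ref{c3}) I use $\log N(\epsilon_n,\scB_n,\|\cdot\|_\infty)\le d\log 2+\max_b\log N(\epsilon_n,\scB_n^b,\|\cdot\|_\infty)$ together with the RKHS metric-entropy estimate of \citet{vandervaart&vanzanten09}, which in dimension $\tilde d$ is of order $(\bar a_n^{(b)})^{\tilde d}(\log n)^{1+\tilde d}\lesssim n\tilde\epsilon_n^2(\log n)^{1+d}$; since $n\epsilon_n^2=K^2 n\tilde\epsilon_n^2(\log n)^{d+1}$, the inflation $\epsilon_n=K\tilde\epsilon_n(\log n)^{(d+1)/2}$ with $K$ large enough makes (\ref{c3}) hold.

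The crux, and the step I expect to be the main obstacle, is the simultaneous treatment of the over-sized subsets with $|b|>d_1$ in (\ref{c2}) and (\ref{c3}): a higher-dimensional $W^{A,b}$ carries a larger RKHS and an intrinsically slower rate, so a careless sieve would inflate both the escape probability and the entropy past the $d_1$-rate. The resolution is the dimension-matched truncation $(\bar a_n^{(b)})^{|b|}\asymp n\tilde\epsilon_n^2$: because the Gamma tail in (\ref{c2}) and the RKHS entropy in (\ref{c3}) depend on the rescaling only through $a^{|b|}$, pinning this single quantity at the $d_1$-rate value $n\tilde\epsilon_n^2$ for every $b$ keeps the escape probability exponentially small and the entropy at order $n\tilde\epsilon_n^2(\log n)^{1+d}$, uniformly in $b$. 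This matching is precisely the mechanism by which variable selection recovers the $d_1$-dimensional rate despite averaging over all $2^d$ subsets, and making the bookkeeping of $M_n$, $\bar a_n^{(b)}$ and the competing logarithmic powers line up is where the real work lies.
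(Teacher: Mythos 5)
Your treatment of (\ref{c1}) is essentially the paper's own argument: condition on $\{B=b_0\}$, pay the fixed factor $\pi_B(b_0)$, reduce to a $d_1$-dimensional rescaled process, and invoke the small-ball estimate of \cite{vandervaart&vanzanten09} with logarithmic power $(1+d_1)/(2+d_1/\alpha)\le (d+1)/2$, so the slack $s>0$ absorbs all constants. Likewise, your ``dimension-matched truncation'' $(\bar a_n^{(b)})^{|b|}\asymp n\tilde\epsilon_n^2$ is exactly the paper's choice ($r_n^{|b|}$ a large multiple of $n\tilde\epsilon_n^2$ for every $b$), and you correctly identify it as the mechanism that keeps all $2^d$ subsets at the $d_1$-rate.

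The genuine gap is the sieve itself. You take $\scB_n^b=M_n\bbH_1^{\bar a_n^{(b)},b}+\epsilon_n\bbB_1$, a single RKHS ball at the truncation scale. But $A$ is continuous, so the escape probability in (\ref{c2}) must be controlled for \emph{every} rescaling value $a\le \bar a_n^{(b)}$, and Borell's inequality for $W^{a,b}$ is formulated in terms of the unit ball $\bbH^{a,b}_1$ of that process's own RKHS, not $\bbH^{\bar a_n^{(b)},b}_1$. Passing from one to the other costs the embedding factor $\bbH^{a,b}_1\subset(\bar a_n^{(b)}/a)^{|b|/2}\bbH^{\bar a_n^{(b)},b}_1$, so at scale $a$ your sieve only supplies an effective Borell radius $M_n(a/\bar a_n^{(b)})^{|b|/2}$. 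To force escape probability $e^{-4n\tilde\epsilon_n^2}$ this effective radius must be of order $\sqrt{n\tilde\epsilon_n^2}$, which with $M_n\asymp\sqrt{n\tilde\epsilon_n^2(\log n)^{1+|b|}}$ requires $a\gtrsim \bar a_n^{(b)}(\log n)^{-(1+|b|)/|b|}$. For moderate $a$ (say $a$ of constant order, which carries constant probability under the $\gam(a_1,a_2)$ prior on $A^{|b|}$), the Borell bound degrades to roughly $\exp\{-c(\log n)^{1+|b|}\}$, quasi-polynomial rather than exponential in $n\tilde\epsilon_n^2$, and (\ref{c2}) is not established. This is precisely why the paper, following Theorem 3.1 of \cite{vandervaart&vanzanten09}, uses the two-part sieve
\begin{equation*}
\scB^b_{M,r,\epsilon,\delta}=\left\{(r/\delta)^{|b|/2}M\bbH^{r,b}_1+\epsilon\bbB_1\right\}\cup\left(\cup_{a<\delta}M\bbH^{a,b}_1+\epsilon\bbB_1\right),
\end{equation*}
with $\delta=\epsilon/(2|b|^{3/2}M)$: the inflation $(r/\delta)^{|b|/2}$ guarantees that for every $a\in[\delta,r]$ the full radius $M$ is available at the native scale $a$, since $M\bbH^{a,b}_1\subset(r/\delta)^{|b|/2}M\bbH^{r,b}_1$ whenever $\delta\le a\le r$, while the separate union handles $a<\delta$, where the inflation would blow up. Crucially, this repair is nearly free in (\ref{c3}) because the inflation enters the entropy bound (\ref{eq2}) only inside a logarithm. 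Your single ball $M_n\bbH_1^{\bar a_n^{(b)},b}+\epsilon_n\bbB_1$ is missing both components, and no adjustment of the constants in $M_n$ fixes it without destroying the entropy bound.
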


\begin{proof}
Define $W^a_{b_0} = (W^a_{b_0}(u): u \in \bbU_{d_0} )$ by $W^a_{b_0}(u) = W^{a, b_0}(u^{b_0})$ where $u^{b_0}$ denotes the unique zero-insertion expansion of $u$ to a $d$-dimensional vector such that $(u^{b_0})_{b_0} = u$. For any $u \in \bbU_{d_0} $, for every $x \in \bbU_d $ with $x_{b_0} = u$ we have $W^{a,b_0}(x) = W^a_{b_0}(u)$ and $f_0(x) = v_0(u)$. So
$$P(\|W^{A,B } - f_0\|_\infty \le \tilde\epsilon_n)  \ge \pi_B(b_0)P(\|W_{b_0}^A - v_0\|_\infty \le \tilde\epsilon_n).$$
From calculations presented in Section 5.1 of \cite{vandervaart&vanzanten09} it follows $P(\|W_{b_0}^A - v_0\|_\infty \le \delta_n) \ge \exp(-n\delta_n^2)$ for $\delta_n$ a large multiple of $n^{-\alpha/(2\alpha + |b_0|)}(\log n)^{(1 + |b_0|)/(2 + |b_0| / \alpha)}$ and all sufficiently large $n$. This leads to (\ref{c1}) because $\tilde\epsilon_n$ is larger than any such $\delta_n$ by a power of $\log n$.

It follows from the proof of Theorem 3.1 of \cite{vandervaart&vanzanten09} that for some $C_0 > 0$, $a_0 > 1$, $0 < \epsilon_0 < 1/2$ and for every $b \in \{0,1\}^d \setminus\{0\}$, $r > a_0$, $0 < \epsilon < \epsilon_0$, $M^2 > C_0 r^{|b|} (\log (r/\epsilon))^{1 + |b|}$ and $\delta = \epsilon / (2|b|^{3/2}M)$ the set
$$\scB^b_{M,r,\epsilon,\delta} = \left\{(r/\delta)^{|b|/2}M\bbH^{r,b}_1 + \epsilon\bbB_1\right\}\cup(\cup_{a < \delta} M\bbH^{a,b}_1 + \epsilon\bbB_1)$$
satisfies
\begin{align}
P(W^{A, b} \not\in \scB ^b_{M, r, \epsilon,\delta}) & \le C_1 r^{(a_1 - 1)|b| + 1}e^{-C_2r^{|b|}} + e^{-M^2/8}\label{eq1}\\
\log N(3\epsilon, \scB ^b_{M, r, \epsilon, \delta}, \|\cdot\|_\infty) & \le C_3r^{|b|}\left( \log \frac{M^{3/2}\sqrt{2|b|^{3/2}r}}{\epsilon^{3/2}}\right)^{1 + |b|} + 2 \log \frac M{\epsilon},\label{eq2}
\end{align}
for universal constants $C_1, C_2, C_3$. For any $b \in \{0, 1\}^d \setminus \{0\}$ define $\scB^b_n = \scB^b_{M_n, r_n, \epsilon_n, \delta_n}$ where $r_n^{|b|}$ is a large multiple of $n\tilde\epsilon_n^2$, $M^2_n$ is a large multiple of $n\tilde\epsilon_n^2 (\log n)^{1 + |b|}$ and $\delta_n = \tilde\epsilon_n/ (2|b|^{3/2}M_n)$. Then by the above inequalities, $P(W^{A, b} \not\in \scB^b_n) \le \exp(-C_4n\tilde\epsilon_n^2)$ and $\log N(K\tilde\epsilon_n, \scB ^b_n, \|\cdot\|_\infty) \le C_5 n \tilde\epsilon_n^2 (\log n)^{1 + |b|}$ for some large constants $C_4$, $C_5$ and $K$. It is easy to construct a $\scB^b_n$ with similar properties when $b = 0$. Therefore (\ref{c2}), (\ref{c3}) hold with $\scB_n = \cup_{b \in \{0,1\}^d} \scB^b_n$.
\end{proof}

\begin{corollary}
\label{vs1}
Let $f_0 : \bbU_d\times[0,1]  \to \rline$ satisfy $f_0(x,u) = v_0(x_{b_0},u)$ for some $b_0 \in \{0,1\}^d$ with $|b_0| = d_1$ and some H\"older $\alpha$-smooth $v_0 : \bbU_{d_1} \times [0,1] \to \rline$. Then for every $s > 0$, there exist measurable subsets $\scB_n \subset \rline^{\bbU_d\times[0,1]}$ and a constant $K > 0$ such that (\ref{c1})-(\ref{c3}) hold with $\tilde W = Z^{A,B,I_d}$, $\tilde \epsilon_n = n^{-\alpha / (2\alpha + d_1 + 1)}(\log n)^{(d + 2)/2 + s}$ and $\epsilon_n = K \tilde \epsilon_n (\log n)^{(d + 2)/2}$.
\end{corollary}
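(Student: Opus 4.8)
The plan is to follow the proof of Theorem \ref{vs} almost verbatim, exploiting the fact that the density regression process $Z^{a,b,I_d}$ is structurally a variable-selection rescaled square-exponential GP in dimension $d+1$ in which the extra coordinate $u$ is \emph{always} active. Indeed, $Z^{a,b,I_d}(x,u) = Z(\diag(ab)\cdot x, au)$ depends on $x$ only through the coordinates selected by $b$, but always depends on $u$, so its effective dimension is $|b| + 1$. This is precisely why the prior on $A$ in $\gpsvstar$ carries the exponent $|b| + 1$ rather than $|b|$, and it is the source of the ``$+1$'' appearing throughout the stated rates. Concretely, the entire argument transfers from Theorem \ref{vs} upon replacing the effective dimension $|b|$ by $|b| + 1$ and $d_1$ by $d_1 + 1$ at every occurrence.

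First, for the prior-concentration bound (\ref{c1}), I would define $Z^a_{b_0}(w,u) = Z^{a,b_0,I_d}(w^{b_0}, u)$ on $\bbU_{d_1}\times[0,1]$, mirroring the definition of $W^a_{b_0}$, where $w^{b_0}$ is the zero-insertion expansion of $w \in \bbU_{d_1}$. For every $x$ with $x_{b_0} = w$ we have $Z^{a,b_0,I_d}(x,u) = Z^a_{b_0}(w,u)$ and $f_0(x,u) = v_0(w,u)$, so
$$P(\|Z^{A,B,I_d} - f_0\|_\infty \le \tilde\epsilon_n) \ge \pi_B(b_0)\, P(\|Z^A_{b_0} - v_0\|_\infty \le \tilde\epsilon_n).$$
Now $Z^A_{b_0}$ is a rescaled $(d_1+1)$-dimensional square-exponential GP and $v_0$ is H\"older $\alpha$-smooth on the $(d_1+1)$-dimensional domain $\bbU_{d_1}\times[0,1]$, so the small-ball calculations of Section 5.1 of \cite{vandervaart&vanzanten09} apply with dimension $d_1+1$, giving $P(\|Z^A_{b_0} - v_0\|_\infty \le \delta_n)\ge e^{-n\delta_n^2}$ for $\delta_n$ a large multiple of $n^{-\alpha/(2\alpha+d_1+1)}(\log n)^{(2+d_1)/(2+(d_1+1)/\alpha)}$. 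Since $\tilde\epsilon_n$ exceeds this $\delta_n$ by a power of $\log n$, (\ref{c1}) follows.

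For the remaining-mass and entropy bounds (\ref{c2})--(\ref{c3}), I would reuse the sieve construction $\scB^b_{M,r,\epsilon,\delta}$ from the proof of Theorem \ref{vs}, with $\bbH^{r,b}$ now the RKHS of the $(d+1)$-dimensional process $Z^{r,b,I_d}$. The tail and entropy estimates analogous to (\ref{eq1})--(\ref{eq2}), obtained from the proof of Theorem 3.1 of \cite{vandervaart&vanzanten09}, hold with $|b|$ replaced by $|b|+1$ throughout (including the Gamma-tail exponent, matched to $A^{|b|+1}\sim\gam(a_1,a_2)$). Choosing $r_n^{|b|+1}$ a large multiple of $n\tilde\epsilon_n^2$, $M_n^2$ a large multiple of $n\tilde\epsilon_n^2(\log n)^{2+|b|}$, and $\delta_n = \tilde\epsilon_n/(2|b|^{3/2}M_n)$ yields $P(Z^{A,b,I_d}\not\in\scB^b_n)\le e^{-C_4 n\tilde\epsilon_n^2}$ and $\log N(K\tilde\epsilon_n,\scB^b_n,\|\cdot\|_\infty)\le C_5 n\tilde\epsilon_n^2(\log n)^{2+|b|}$. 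Setting $\scB_n = \cup_{b\in\{0,1\}^d}\scB^b_n$ and taking the worst case $|b|=d$, the entropy is bounded by $C_5 n\tilde\epsilon_n^2(\log n)^{d+2}$, which matches $n\epsilon_n^2$ up to the constant $K$ for the stated $\epsilon_n = K\tilde\epsilon_n(\log n)^{(d+2)/2}$, giving (\ref{c3}); the union bound over the finitely many $b$ preserves (\ref{c2}).

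The one point requiring genuine care is verifying that the RKHS of $Z^{a,b,I_d}$ really is the $(|b|+1)$-dimensional square-exponential RKHS rescaled by $a$ in its active directions, so that the dimension-$(|b|+1)$ versions of the estimates above are legitimately available. This holds because $Z$ has isotropic square-exponential covariance on $\rline^d\times[0,\infty)$, whence $Z^{a,b,I_d}$ has spectral measure equal to the $(d+1)$-dimensional Gaussian with variance $2a^2\diag(b,1)$; its RKHS and metric entropy are therefore exactly as in the purely $x$-based case of Theorem \ref{vs}, but one dimension higher. The only cosmetic difference is that the domain is the cylinder $\bbU_d\times[0,1]$ rather than a disc, which does not affect the bounded-domain covering-number arguments. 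I expect this bookkeeping---keeping the ``$+1$'' consistent across the small-ball exponent, the Gamma-prior exponent, and the log-power in the entropy---to be the only real obstacle; everything else transfers mechanically from Theorem \ref{vs}.
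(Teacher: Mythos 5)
Your proposal is correct and is exactly the argument the paper intends: the paper's own proof of this corollary is a one-line remark that the proof of Theorem \ref{vs} carries over with all dimensions increased by one, since the selection parameter does not act on the $u$ coordinate, and your write-up is a faithful (and more detailed) execution of precisely that transfer, with the ``$+1$'' correctly propagated through the small-ball exponent, the Gamma-prior exponent $A^{|b|+1}$, and the log-powers in the sieve and entropy bounds.
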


\begin{proof}
A proof can be constructed exactly along the lines of the proof above. The extra variable does not alter calculations, except for increasing all dimensions by one, because the variable selection parameter does not operate on it.
\end{proof}

\subsection{Linear projection extension}

Our proof of Theorem \ref{vs} is made relatively straightforward by the fact that $B$ lives on a discrete set. This is no longer the case when we work with $W^{A,B,Q}$ with $Q$ taking values on a continuum. However the support of $Q$, namely $\scO_d$ is a well behaved compact set, a fact that we make good use of. To start with, here is a result that shows how to relate the RKHS $\bbH^{a,b,q}$ with the RKHS $\bbH^{a,b,\tq}$ when $q, \tq \in \scO_d$ are close to each other.

\begin{lemma}
For any $a > 0$, $b \in \{0,1\}^d$ and $q, \tq \in \scO_d$, $\bbH_1^{a,b,q} \subset \bbH_1^{a,b,\tilde q } + a\sqrt d \|q - \tilde q \|_S\bbB_1$ where $\|\cdot\|_S$ denotes the spectral norm on $\scO_d$.
\label{q lem}
\end{lemma}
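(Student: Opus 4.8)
The plan is to work with the spectral description of the RKHS recalled just above, and to transport a given element of $\bbH^{a,b,q}$ into $\bbH^{a,b,\tilde q}$ by ``rotating'' its spectral representation. The key observation is that both $\mu_{a,b,q}$ and $\mu_{a,b,\tilde q}$ are pushforwards of a single reference measure: if $\nu$ denotes the (degenerate) Gaussian $\nm(0, 2a^2\diag(b))$ on $\rline^d$, then $\mu_{a,b,q}$ is the law of $q'\eta$ for $\eta \sim \nu$, since $q'\{2a^2\diag(b)\}q = 2a^2 q'\diag(b)q$. Changing variables $\lambda = q'\eta$, any $h \in \bbH_1^{a,b,q}$ written as $h(x) = Re\{\int e^{i(\lambda, x)}\psi(\lambda)\, d\mu_{a,b,q}(\lambda)\}$ with $\|\psi\|_{L_2(\mu_{a,b,q})} \le 1$ can be recast as $h(x) = Re\{\int e^{i(\eta, qx)}\phi(\eta)\, d\nu(\eta)\}$, where $\phi(\eta) = \psi(q'\eta)$ satisfies $\|\phi\|_{L_2(\nu)} = \|\psi\|_{L_2(\mu_{a,b,q})} \le 1$.

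Next I would define the candidate $\tilde h(x) = Re\{\int e^{i(\eta, \tilde q x)}\phi(\eta)\, d\nu(\eta)\}$, obtained by replacing $q$ with $\tilde q$ in the phase while keeping the same amplitude $\phi$ against the common reference $\nu$. Undoing the change of variables now with $\tilde q$ in place of $q$ shows that $\tilde h(x) = Re\{\int e^{i(\tilde\lambda, x)}\psi(q'\tilde q\, \tilde\lambda)\, d\mu_{a,b,\tilde q}(\tilde\lambda)\}$, so $\tilde h$ is an element of $\bbH^{a,b,\tilde q}$ with spectral density $\tilde\psi(\tilde\lambda) = \psi(q'\tilde q\,\tilde\lambda)$. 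Because this last substitution is again a change of variables against $\nu$, one computes $\|\tilde\psi\|_{L_2(\mu_{a,b,\tilde q})} = \|\phi\|_{L_2(\nu)} \le 1$, so that $\tilde h \in \bbH_1^{a,b,\tilde q}$. This transport step---verifying that the rotated function really lands in the \emph{unit} ball of the target RKHS, with the norm carried across exactly by the orthogonality of $q$ and $\tilde q$---is the crux of the argument and the place where the change-of-variables bookkeeping must be handled with care; the rest is routine.

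It then remains to bound $\|h - \tilde h\|_\infty$. For $x \in \bbU_d$, representing the difference against the common measure gives $h(x) - \tilde h(x) = Re\{\int (e^{i(\eta, qx)} - e^{i(\eta, \tilde q x)})\phi(\eta)\, d\nu(\eta)\}$. I would invoke the elementary bound $|e^{i\theta_1} - e^{i\theta_2}| \le |\theta_1 - \theta_2|$ with $\theta_1 - \theta_2 = (\eta, (q - \tilde q)x)$, whence $|e^{i(\eta, qx)} - e^{i(\eta, \tilde q x)}| \le \|\eta\|\,\|q - \tilde q\|_S$ using $\|x\| \le 1$. Cauchy--Schwarz in $L_2(\nu)$ together with $\int \|\eta\|^2 d\nu(\eta) = \mathrm{tr}(2a^2\diag(b)) = 2a^2|b|$ and $\|\phi\|_{L_2(\nu)} \le 1$ then yields $\|h - \tilde h\|_\infty \le a\sqrt{2|b|}\,\|q - \tilde q\|_S$, which is of the claimed form $a\sqrt d\,\|q - \tilde q\|_S$ up to the harmless absolute constant $\sqrt 2$ (recall $|b| \le d$, and the constant is immaterial for the later scalings). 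Since $h \in \bbH_1^{a,b,q}$ was arbitrary, the inclusion $\bbH_1^{a,b,q} \subset \bbH_1^{a,b,\tilde q} + a\sqrt d\,\|q - \tilde q\|_S\,\bbB_1$ follows. The only genuine obstacle is the transport-and-norm-preservation step of the second paragraph; the final estimate is a direct application of Cauchy--Schwarz and a trace computation.
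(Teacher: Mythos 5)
Your proof is correct and follows essentially the same route as the paper: your $\tilde h$ is exactly the paper's $\tilde h(x) = h(q'\tilde q\,x)$ (the common-reference-measure bookkeeping just repackages its change of variables $\tilde\psi(\lambda) = \psi(q'\tilde q\,\lambda)$), and your final estimate is the same Cauchy--Schwarz/second-moment computation that the paper phrases as a Lipschitz bound on $h$. The factor $\sqrt 2$ you flag is not a real deviation either: the paper's own step bounding $\sqrt{\int\|\lambda\|^2\,d\mu_{a,b,q}(\lambda)}$ by $a\sqrt d$ contains the identical slip (that integral equals $2a^2|b|$), and the constant is immaterial in every later use of the lemma.
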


\begin{proof}
Any $h \in \bbH_1^{a,b,q}$ can be expressed as $h(x) = Re\{\int e^{i(\lambda, x)} \psi(\lambda)d\mu_{a,b,q}(\lambda)\}$ for some $\psi \in L_2(\mu_{a,b,q})$ with norm no larger than 1. For any $x, z \in \bbU_d $, 
\[
|h(x) - h(z)| \le \int |(\lambda, x - z)||\psi(\lambda)|d\mu_{a,b,q}(\lambda) \le \|x - z\| \sqrt{\smallint \|\lambda\|^2 d\mu_{a,b,q}(\lambda)} \le a\sqrt d\|x - z\|
\]
where the second inequality follows from two applications of the Cauchy-Schwartz inequality. 

Define $\tilde h(x) = h(q'\tilde q x)$ and $\tilde\psi(\lambda) = \psi(q'\tilde q \lambda)$. Then $\tilde\psi \in L_2(\mu_{a,b,\tq})$ with norm no larger than 1 and $\tilde h(x)$ is the real part of
$$\int e^{i(\lambda, q'\tilde q x)} \psi(\lambda)d\mu_{a,b,q}(\lambda)  = \int e^{i(\tilde q 'q\lambda, x)} \tilde\psi(\tilde q' q\lambda)d\mu_{a,b,q}(\lambda) = \int e^{i(\lambda, x)} \tilde\psi(\lambda)d\mu_{a,b,\tq}(\lambda),$$
and therefore $\tilde h \in \bbH_1^{a,b,\tq}$. From this the result follows because for any $x \in \bbU_d $, $|h(x) - \tilde h(x)| \le a\sqrt d \|x - q'\tq x\| \le a \sqrt d \|q - \tq\|_S$.
\end{proof}

Next we present the counterpart of Theorem \ref{vs} for the linear projection extension. Notice that if $f_0(x)$ depends on $x$ only through a rank-$d_0$ linear projection $Rx$ then there exist $b_0 \in \{0, 1\}^d$ with $|b_0| = d_0$, $q_0 \in \scO_d$ and $v_0 \in H_{\alpha, d_0}$ such that $f_0(x) = v_0((q_0x)_{b_0})$.

\begin{theorem}
Let $f_0 : \bbU_d  \to \rline$ satisfy $f_0(x) = v_0((q_0x)_{b_0})$ for some $q_0 \in \scO_d$, $b_0 \in \{0,1\}^d$ with $|b_0| = d_0$ and some $v_0 \in H_{\alpha, d_0}$ with $\alpha > 1$. Then for every $s > 0$, there exist measurable subsets $\scB_n \subset \rline^{\bbU_d}$ and a constant $K > 0$ such that (\ref{c1})-(\ref{c3}) hold with $\tilde W = W^{A,B,Q}$, $\tilde\epsilon_n = n^{-\alpha / (2\alpha + d_0)}(\log n)^{(d + 1)/2 + s}$ and $\bar \epsilon_n = K\tilde\epsilon_n (\log n)^{(d + 1)/2}$.
\end{theorem}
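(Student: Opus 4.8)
The plan is to mirror the proof of Theorem \ref{vs}, reducing everything to the variable-selection calculations at a fixed projection matrix and then paying only a logarithmic price to handle the continuum of values taken by $Q$. The polynomial part of the rate is exactly that of Theorem \ref{vs} with $d_1$ replaced by $d_0$; the passage from the discrete $B$ to the continuous $Q$ will be absorbed into the surplus powers of $\log n$ carried by $\tilde\epsilon_n$. The single new tool is Lemma \ref{q lem}, which lets me transport both centering functions and sieve sets from a reference matrix $q_0$ (or a net point) to a nearby $Q$ at a sup-norm cost of $a\sqrt d\,\|q_0-Q\|_S$ per unit of RKHS norm. Throughout I will exploit that the sup-norm of $W^{a,b,q}$ does not depend on $q$ at all: since $q\bbU_d=\bbU_d$, the image set $\diag(ab)q\bbU_d=\diag(ab)\bbU_d$ is the same for every $q$, so $\sup_{x\in\bbU_d}|W^{a,b,q}(x)|=\sup_{z\in\diag(ab)\bbU_d}|W(z)|$ is literally $q$-invariant, and hence so are all small-ball probabilities.

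For the prior-mass condition (\ref{c1}) I would restrict to the event $\{B=b_0\}\cap\{Q\in\scQ_n\}$, where $\scQ_n=\{q\in\scO_d:\|q-q_0\|_S\le\eta_n\}$ and $\eta_n$ is a suitably small power of $n$. Since $\pi_B(b_0)>0$ is a fixed constant and $\pi_Q$ is a strictly positive density on the $m$-dimensional compact manifold $\scO_d$, we have $\pi_Q(\scQ_n)\gtrsim\eta_n^{m}$, so conditioning costs only $-\log\{\pi_B(b_0)\pi_Q(\scQ_n)\}=O(\log n)$, negligible against $n\tilde\epsilon_n^2$. At $q_0$ the process $W^{a,b_0,q_0}$ depends on $x$ only through $(q_0x)_{b_0}\in\bbU_{d_0}$, so Section 5.1 of \cite{vandervaart&vanzanten09} supplies, for $a$ in a favourable range, a centering $h^\ast\in\bbH^{a,b_0,q_0}$ with $\|h^\ast-f_0\|_\infty\le\delta_n$ and $\tfrac12\|h^\ast\|^2\le n\delta_n^2$ for $\delta_n$ a large multiple of $n^{-\alpha/(2\alpha+d_0)}(\log n)^{(1+d_0)/(2+d_0/\alpha)}$. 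Transporting $h^\ast$ to $\tilde h\in\bbH^{a,b_0,Q}$ by Lemma \ref{q lem} gives $\|\tilde h\|_{\bbH^{a,b_0,Q}}\le\|h^\ast\|$ and $\|\tilde h-h^\ast\|_\infty\le\|h^\ast\|\,a\sqrt d\,\eta_n\le\delta_n$ once $\eta_n$ is polynomially small; combined with the $q$-invariant small-ball bound and the decentering inequality this yields $P(\|W^{a,b_0,Q}-f_0\|_\infty\le 3\delta_n)\ge e^{-2n\delta_n^2}$ uniformly over $\scQ_n$. The hypothesis $\alpha>1$ is what guarantees $v_0$ is Lipschitz, so that $f_0$ is still well approximated under the rotated projection seen at $Q\ne q_0$; integrating over $\scQ_n$ and summing in $b$ then gives (\ref{c1}), since $\tilde\epsilon_n$ exceeds $\delta_n$ by a power of $\log n$.

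For the sieve conditions (\ref{c2})--(\ref{c3}) I would cover the compact set $\scO_d$ by $N_n\asymp\zeta_n^{-m}$ balls of $\|\cdot\|_S$-radius $\zeta_n$, with centres $q_1,\dots,q_{N_n}$ and $\zeta_n$ again a small power of $n$. For each $b\in\{0,1\}^d\setminus\{0\}$ and each centre $q_j$ I take the variable-selection set $\scB^b_{M_n,r_n,\epsilon_n,\delta_n}$ built from $\bbH^{\cdot,b,q_j}$ as in Theorem \ref{vs}, and fatten it by $r_n\sqrt d\,\zeta_n\,\bbB_1$. By Lemma \ref{q lem} this fattened set contains the analogous set centred at any $q$ within $\zeta_n$ of $q_j$, so the escape bound (\ref{eq1}) and the entropy bound (\ref{eq2}) both transfer to $W^{A,b,q}$ for every such $q$. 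Taking $\scB_n=\bigcup_{b}\bigcup_{j}(\text{fattened }\scB^{b,q_j}_n)$, with the easy modification at $b=0$, condition (\ref{c2}) follows from $P(W^{A,B,Q}\notin\scB_n)\le\sum_b\pi_B(b)\,\sup_q P(W^{A,b,q}\notin\text{its piece})\le 2^d e^{-C_4 n\tilde\epsilon_n^2}$, the integral over $Q$ and the supremum over the net cell being handled uniformly by the transfer; and condition (\ref{c3}) follows from $\log N(\cdot)\le\log(2^d N_n)+\max_{b,j}\log N(\text{piece})=O(\log n)+C_5 n\tilde\epsilon_n^2(\log n)^{1+|b|}\le n\epsilon_n^2$ for $\epsilon_n=K\tilde\epsilon_n(\log n)^{(d+1)/2}$, since $m\log(1/\zeta_n)=O(\log n)$ is absorbed.

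The main obstacle is the joint calibration of the two net radii $\eta_n,\zeta_n$ against the magnitude of the rescaling. Because the Lipschitz constant of RKHS elements and the fattening amount both grow with $a$ (which ranges up to $r_n$, a power of $n$), the transfer errors from Lemma \ref{q lem} are of order $r_n\zeta_n$ and $\|h^\ast\|a\eta_n$; these must be driven below the working scales $\epsilon_n$ and $\delta_n$ while keeping both the cardinality $N_n=\zeta_n^{-m}$ and the mass loss $\eta_n^{m}$ only logarithmically expensive. Verifying that a single polynomial choice of $\eta_n$ and $\zeta_n$ meets all these constraints at once --- and checking that $\alpha>1$ indeed suffices for the perturbed-projection step in (\ref{c1}) --- is the crux; everything else is bookkeeping strictly parallel to Theorem \ref{vs}.
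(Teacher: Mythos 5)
Your architecture matches the paper's: reduce everything to the variable-selection calculation at a reference projection, pay only logarithmic prices for the continuum of $Q$ (positivity of $\pi_Q$ for the prior mass, a spectral-norm net of $\scO_d$ for the entropy), and use Lemma \ref{q lem} to transfer between nearby projections. Your handling of (\ref{c1}) is a legitimate variant of the paper's: you transport the RKHS approximant $h^\ast$ from $\bbH^{a,b_0,q_0}$ to $\bbH^{a,b_0,Q}$ with error $\|h^\ast\|\,a\sqrt d\,\eta_n$ (correctly tracking the norm factor $\|h^\ast\|$), whereas the paper never moves the centering at all: it compares $f_0$ with $f_q(x)=f_0(q_0'qx)$ using the Lipschitz bound $\|f_0-f_q\|_\infty\le D_2\|q_0-q\|_S$ (this is precisely where $\alpha>1$ enters) together with rotational invariance of $W$, and only needs $\pi_Q$-mass of a ball of radius proportional to $\tilde\epsilon_n$, of order $\tilde\epsilon_n^{d(d-1)/2}$. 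Both routes give (\ref{c1}); yours, if anything, does not even need $\alpha>1$ at this step.

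The gap is in the sieve step. You claim that fattening $\scB^{b,q_j}_n$ by $r_n\sqrt d\,\zeta_n\,\bbB_1$ suffices, ``by Lemma \ref{q lem}'', to contain $\scB^{b,q}_n$ for every $q$ with $\|q-q_j\|_S\le\zeta_n$. That is false as stated: Lemma \ref{q lem} is a statement about \emph{unit} balls, and the core of $\scB^{b,q}_n$ is the scaled ball $(r_n/\delta_n)^{|b|/2}M_n\bbH^{r_n,b,q}_1$, so the sup-norm transport cost is $(r_n/\delta_n)^{|b|/2}M_n\cdot r_n\sqrt d\,\zeta_n$ --- larger than your fattening radius by the factor $(r_n/\delta_n)^{|b|/2}M_n$, which is a large power of $n$. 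Your closing paragraph repeats the same miscount (``transfer errors \dots of order $r_n\zeta_n$''), even though you tracked the analogous $\|h^\ast\|$ factor correctly in the prior-mass step, so this is not a typo but a genuine calibration error: with your stated radii, the containment fails and (\ref{c3}) is not established for the sieve you build. The fix is exactly the paper's choice of net radius, $\zeta_n=\epsilon_n/\{M_nr_n\sqrt d\,(r_n/\delta_n)^{|b|/2}\}$, which makes the transported set sit inside $\scB^{b,q_j}_n+\epsilon_n\bbB_1$; since $M_n$, $r_n$ and $1/\delta_n$ are all polynomial in $n$, one still has $\log|\scQ_n|\le\tfrac{d(d-1)}{2}\log(1/\zeta_n)+O(1)=O(\log n)$, so your absorption of the net cardinality into $n\epsilon_n^2$ survives and the rest of your argument (escape probability handled pointwise in $q$, entropy via net point plus one extra $\epsilon_n$ in the radius) goes through unchanged.
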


\begin{proof}
Clearly,
$$P(\|W^{A,B,Q } - f_0\|_\infty \le \epsilon_n)  \ge \pi_B(b_0) \int_{\scO_d} P(\|W^{A, b_0, q} - f_0\|_\infty \le \epsilon_n)\pi_Q(q)dq.$$
For any $q \in \scO_d$, define $W^a_{b_0,q} = (W^a_{b_0,q}(u): u \in \bbU_{d_0} )$ and $f_q: \bbU_d \to \rline$ by $W^a_{b_0,q}(u) = W^{a, b_0, q}(q'u^{b_0})$ and $f_q(x) = v_0((qx)_{b_0})$. For any $u \in \bbU_{d_0} $ and every $x \in \bbU_d $ with $(qx)_{b_0} = u$ we have $W^{a,b_0,q}(x) = W^a_{b_0,q}(u)$ and $f_q(x) = v_0(u)$. Now, if $q \in \scO_d$ is such that $\|f_0 - f_q\|_\infty < \tilde\epsilon_n/2$ then
\begin{align*}
P(\|W^{A,b_0,q} - f_0\|_\infty \le \tilde\epsilon_n) \ge P(\|W^{A,b_0,q} - f_q\|_\infty \le \tilde\epsilon_n/2) = P(\|W^a_{b_0,q} - v_0\|_\infty \le \tilde\epsilon_n/2).
\end{align*}
This last probability does not depend on $q$ because $W$ is rotationally invariant and hence equals $P(\|W^a_{b_0,q_0} - v_0\|_\infty \le \tilde\epsilon_n/2) \ge e^{-n\delta_n^2}$ with $\delta_n$ a multiple of $n^{-\alpha/(2\alpha + |b_0|)}(\log n)^{(1 + |b_0|)/(2 + |b_0|/\alpha)}$, as in the previous theorem. From this (\ref{c1}) would follow if we can show $P(Q \in \{q: \|f_0 - f_q\|_\infty \le \tilde \epsilon_n/2\}) \ge e^{-n\delta_n^2}$. Note that $f_q(x) = f_0(q_0'qx)$ for all $x \in \bbU_d $. By assumption on $v_0$, $f_0$ has a bounded continuous derivative and hence $\|f_0 - f_q\|_\infty \le D_2 \|q_0 - q\|_S$ for some $D_2 < \infty$. But $P(\|Q - q_0\|_S \le \tilde\epsilon_n / (2D_2)) \ge D_3 \tilde\epsilon_n^{d(d - 1)/2}$ for some constant $D_3$ because a spectral ball of radius $\delta$ in $\scO_d$ has volume of the order $\delta^{d(d - 1)/2}$ for all small $\delta > 0$ and $\pi_Q$ is strictly positive on $\scO_d$. This completes the proof of the first assertion because $\tilde\epsilon_n^{d(d - 1)/2} \ge e^{-n\delta_n^2}$ as $-\log\tilde \epsilon_n / (n\delta_n^2) \to 0$.

To construct the sets $\scB_n$ we adapt the approach taken in the previous theorem to include $q$. In particular, for each $b \in \{0,1\}\setminus\{0\}$ and each $q \in \scO_d$, define
$$\scB^{b, q}_{M,r,\epsilon,\delta} = \left\{(r/\delta)^{|b|/2}M\bbH^{r,b,q}_1 + \epsilon\bbB_1\right\}\cup(\cup_{a < \delta} M\bbH^{a,b,q}_1 + \epsilon\bbB_1).$$
Inequalities (\ref{eq1}) and (\ref{eq2}) continue to hold with $\scB^b_{M,r,\epsilon,\delta}$ and $W^{A,b}$ respectively replaced with $\scB^{b,q}_{M,e,\epsilon,\delta}$ and $W^{A,b,q}$. Therefore by defining $\scB^{b,q}_n = \scB^{b,q}_{M_n,r_n,\epsilon_n,\delta_n}$ with $M_n, r_n$ and $\delta_n$ exactly as before we have $P(W^{A,b,q} \not\in \scB_n^{b,q}) \le \exp(-C_4n\epsilon_n^2)$ and $\log N(K\epsilon_n, \scB_n^{b,q}, \|\cdot\|_\infty) \le C_5 n\epsilon_n^2 (\log n)^{1 + |b|}$ for some finite constants $C_4, C_5$ and $K$. 

Fix any $b \in \{0,1\}^d \setminus \{0\}$ and take $\scB^b_n = \cup_{q \in \scO_d} \scB^{b,q}_n$. Then $P(W^{A, b, Q} \not\in \scB^b_n) \le \exp(-C_4n\epsilon_n^2)$. To bound the entropy of $\scB^b_n$, first get an $\zeta_n$-spectral norm net $\scQ_n$ of $\scO_d$ where $\zeta_n = \epsilon_n / \{M_nr_n\sqrt d (r_n/\delta_n)^{|b|/2}\}$. The size of $\scQ_n$ is no larger than $C_6 \zeta_n^{-d(d - 1)/2}$ for some universal constant $C_6$. For any $q \in \scO_d$ find $\tilde q  \in \scQ_n$ such that $\|q - \tilde q \|_S \le \zeta$. Then by Lemma \ref{q lem}, $\scB^{b, q}_n \subset \scB^{b,\tq}_n + \epsilon_n \bbB_1$ and hence,
$$\log N((K + 1)\epsilon_n, \scB^b_n, \|\cdot\|_\infty) \le \log \max_{\tq \in \scQ_n} N(K\epsilon_n, \scB^{b, \tq}_n, \|\cdot\|_\infty) + \log |\scQ_n|$$
which is smaller than $C_7 n\epsilon_n^2 (\log n)^{1 + |b|}$ for some constant $C_7$. Therefore (\ref{c2}), (\ref{c3}) hold with $\scB_n = \cup_{b \in \{0,1\}^d}\scB^b_n$.

\end{proof}

\begin{corollary}
\label{lp1}
Let $f_0 : \bbU_d\times[0,1]  \to \rline$ satisfy $f_0(x,u) = v_0((q_0x)_{b_0},u)$ for some $q_0 \in \scO_d$, $b_0 \in \{0,1\}^d$ with $|b_0| = d_1$ and some H\"older $\alpha$-smooth $v_0 : \bbU_{d_0} \times [0,1] \to \rline$. Then for every $s > 0$, there exist measurable subsets $\scB_n \subset \rline^{\bbU_d\times[0,1]}$ and a constant $K > 0$ such that (\ref{c1})-(\ref{c3}) hold with $\tilde W = Z^{A,B,Q}$, $\tilde \epsilon_n = n^{-\alpha / (2\alpha + d_0 + 1)}(\log n)^{(d + 2)/2 + s}$ and $\epsilon_n = K \tilde \epsilon_n (\log n)^{(d + 2)/2}$.
\end{corollary}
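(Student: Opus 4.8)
The plan is to run the proof of the preceding linear-projection theorem essentially verbatim, with the single modification that the auxiliary coordinate $u$ raises every relevant dimension by one; this is the same passage from Theorem \ref{vs} to Corollary \ref{vs1}, except that now the continuum-valued parameter $Q$ must be handled as in the projection theorem. The crucial structural fact is that the rescaling parameter $A$ acts on the $u$-axis (recall $Z^{a,b,q}(x,u) = Z(\diag(ab)\cdot qx, au)$) while $B$ and $Q$ do not; this is exactly why $A^{|b|+1}$ rather than $A^{|b|}$ carries the $\gam(a_1,a_2)$ prior, so that all the tail and entropy computations in (\ref{eq1})--(\ref{eq2}) go through with $|b|$ replaced by $|b|+1$ throughout.

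For the prior-concentration bound (\ref{c1}) I would first lower bound by $\pi_B(b_0)\int_{\scO_d} P(\|Z^{A,b_0,q}-f_0\|_\infty \le \tilde\epsilon_n)\pi_Q(q)\,dq$, then introduce the $(d_0+1)$-dimensional process $Z^a_{b_0,q}$ on $\bbU_{d_0}\times[0,1]$ and the reparametrized target $f_q(x,u)=v_0((qx)_{b_0},u)$, exactly as $W^a_{b_0,q}$ and $f_q$ were introduced before. Because the covariance of $Z$ factorizes into an isotropic $x$-part and an untouched $u$-part, the reparametrized process $Z^a_{b_0,q}$ in fact does not depend on $q$ at all, so the small-ball probability equals its $q_0$ value, to which the Section 5.1 calculations of \cite{vandervaart&vanzanten09} apply \emph{in dimension $d_0+1$} (using that $v_0$ is H\"older $\alpha$ jointly on $\bbU_{d_0}\times[0,1]$), yielding $\delta_n$ a multiple of $n^{-\alpha/(2\alpha+|b_0|+1)}$ times a power of $\log n$. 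The remaining step is identical to the projection theorem: since $\alpha>1$, $v_0$ and hence $f_0$ are Lipschitz, so $\|f_0-f_q\|_\infty \le D_2\|q_0-q\|_S$, and the spectral-ball volume estimate in $\scO_d$ with positivity of $\pi_Q$ gives $P(\|Q-q_0\|_S \le \tilde\epsilon_n/(2D_2)) \ge D_3\tilde\epsilon_n^{d(d-1)/2} \ge e^{-n\delta_n^2}$.

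For the sieve (\ref{c2})--(\ref{c3}) I would build $\scB^{b,q}_n$ from the RKHS balls of $Z^{a,b,q}$ exactly as the sets $\scB^{b,q}_{M,r,\epsilon,\delta}$ were built, note that (\ref{eq1})--(\ref{eq2}) hold with $|b|$ replaced by $|b|+1$, and then union over $q$ via a $\zeta_n$-net $\scQ_n$ of $\scO_d$. The only ingredient that formally needs restating is the $Z$-analog of Lemma \ref{q lem}: I would check that, because the spectral measure of $Z$ is a product of the $x$-spectral measure (on which $q'\tq$ acts by the same change of variables as before) and an untouched $u$-spectral measure, the inclusion $\bbH_1^{a,b,q}\subset \bbH_1^{a,b,\tq}+a\sqrt{d}\,\|q-\tq\|_S\bbB_1$ persists with the identical constant, since the perturbation bound only ever uses the $x$-frequencies. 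With this in hand the covering-number union argument is word-for-word the same, and (\ref{c2})--(\ref{c3}) follow with $\scB_n=\cup_{b}\scB^b_n$. I expect no genuine obstacle beyond bookkeeping; the one point to verify with care is that applying the small-ball estimate to $Z^a_{b_0,q_0}$ really behaves as a $(d_0+1)$-dimensional H\"older problem, which it does precisely because $v_0$ is assumed H\"older $\alpha$ jointly on $\bbU_{d_0}\times[0,1]$.
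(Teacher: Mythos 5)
Your proposal is correct and follows exactly the route the paper takes: the paper's own proof is simply the observation that the projection theorem's argument extends almost verbatim because $Q$ does not act on the auxiliary coordinate $u$, with every dimension count raised by one (matching the $A^{|b|+1} \sim \gam(a_1,a_2)$ prior). Your additional care in checking the $Z$-analog of Lemma \ref{q lem} via the product structure of the spectral measure is precisely the bookkeeping the paper leaves implicit.
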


\begin{proof}
Again, the additional variable is unaffected by the projection parameter which operates only on the $x$ variable. So the above proof can be extended almost verbatim to prove this result.
\end{proof}

\section{Discussion}
\label{disc}

Besides variable selection and linear projection, another common way of extending Gaussian processes is to equip them with a vector of rescaling parameters, each operating along a single axis \citep{willras96}. In our notations, this could be defined as $W^a = (W^{a}(x): x \in \bbU_d)$ with $W^a(x)=W(\diag(a) \cdot x)$, for $a \in [0,\infty)^d$. The law of $W^A$, with $A$ assigned some prior distribution $\pi_A$, can be used as a prior distribution for a function valued parameter $f : \bbU_d \to \rline$. In an independent work, done in parallel to ours and posted on \url{arXiv.org}, \cite{anirban11} explore and establish some very interesting adaptability properties of such extensions. From a practitioner's point of view, the most interesting extension along this line would be $W^{A,Q}$ with $(A, Q) \sim \pi_A \times \pi_Q$ where for any $a \in [0,\infty)$, $q \in \scO_d$, one defines $W^{a,q}(x) = W(\diag(a) \cdot qx)$. One should be able to study theoretical properties of such an extension by combining the results presented in this paper and in \cite{anirban11}, but the details remain to be verified.

Note that we restrict to functions $f_0$ that are only finitely differentiable. For $f_0$ that are infinitely differentiable and satisfy some regularity conditions, a rescaled GP model offers a nearly parametric posterior convergence rate of $n^{-1/2}(\log n)^k$ for some $k$ that depends on $d$ and $f_0$ \citep{vandervaart&vanzanten09}. The dimension does not affect the leading term $n^{-1/2}$ and our techniques in Section \ref{basic} do not offer improvement in the logarithmic factor. 

It might seem a little underwhelming that for density regression our choice of metric $\rho_{G_x}(g_1, g_2)$ essentially defines the Hellinger metric between the joint densities $h_1(x, y) = g_1(y|x)$, and $h_2(x, y) = g_2(y|x)$, with respect to the product of $G_x$ and the Lebesgue measure on $\rline$, thus transporting the problem to one where one studies the joint density of $(X, Y)$. We make two observations to point out why this is not a terrible thing to do. First, no modeling is done on the unknown distribution $G_x$, the joint density view is purely a technical tool needed to map conditions required to prove Theorem \ref{thm denreg} to conditions (\ref{c1})-(\ref{c3}). Second, the goals of regression are well preserved despite the use of $G_x$ in defining $\rho_{G_x}$. Suppose one is interested in inference on $g(y|x^*)$ for test data $x^*$ generated from $G^*_x$, possibly different from $G_x$. For this task, a more useful metric is given by $\rho_{G^*_x}(g_1, g_2)$, defined in the same way as $\rho_{G_x}$ but with $G^*_x$ in place of $G_x$.  But an $\epsilon_n$ rate of convergence in $\rho_{G_x}$ also implies an $\epsilon_n$ rate of convergence in $\rho_{G^*_x}$ as long as $G^*_x$ is absolutely continuous with respect to $G_x$. Absolute continuity is unavoidable because one can hope to make accurate prediction only at points where data accumulate.

A related issue is the debate whether density regression can be essentially carried out by a nonparametric estimation of the joint density of $(X, Y)$, such as in \cite{muller96}. Our results indicate that if inference on the conditional densities of $Y$ given $X$ is of interest, then there might indeed be an advantage in pursuing the density regression formulation with the potential of obtaining much faster convergence rates through a suitable projection of $X$. From a practitioner's point of view, this would mean sharper inference, with shorter credible bands for the same amount data than what one would obtain in the joint density estimation formulation.

%
%
%
%

\bibliographystyle{chicago}
\bibliography{/Users/suryatokdar/Desktop/Utils/TokdarReferences}

\end{document}